\newtheorem{theorem}{Théorème}
\newtheorem{proposition}{Proposition}
\newtheorem{corollary}{Corollaire}
\newtheorem{definition}{Définition}
\title{Un théorème du support pour la fibration de Hitchin}
\author{Pierre-Henri Chaudouard\footnote{soutenu par l'Institut Universitaire de France et les  projets Ferplay ANR-13-BS01-0012 et  Vargen ANR-13-BS01-0001-01 de l'ANR}  \ et Gérard Laumon}
\date{}
\begin{document}
\maketitle

\section{Introduction}

Soit $X$ une courbe connexe, projective et lisse, de genre $g$, sur un corps $k$ algébriquement clos. Soit $D$ un diviseur sur $X$ de degré $>2g-2$ et soit $G$ un groupe réductif. Notons $f:\mathcal{N}\rightarrow\mathbb{A}$ la fibration de Hitchin associée à ces données.
\medskip

Sur l'ouvert elliptique $\mathbb{A}^{\mathrm{ell}}\subset\mathbb{A}$, chaque composante connexe de cette fibration est une gerbe sur un morphisme propre $f^{\mathrm{ell}}:N^{\mathrm{ell}}\rightarrow\mathbb{A}^{\mathrm{ell}}$, de source $N^{\mathrm{ell}}$ lisse sur $k$, auquel on peut appliquer le théorème de décomposition de Beilinson, Bernstein, Deligne et Gabber \cite{Beilinson}. L'outil principal dans la démonstration par Ngô \cite{Ngo} du lemme fondamental de Langlands-Shelstad pour $G$, est un théorème sur les supports des constituents simples de la cohomologie relative de $f^{\mathrm{ell}}$.
\medskip

Dans \cite{Chaudouard}, nous avons prolongé ce résultat de Ngô à l'ouvert génériquement régulier semi-simple $\mathbb{A}^{\mathrm{ell}}\subset\mathbb{A}^{\mathrm{grss}}\subset\mathbb{A}$ de la fibration de Hitchin.
\medskip

Dans le cas particulier où $k$ est de caractéristique nulle, $G=\mathrm{GL}(n)$ et $D$ est de degré $>2g-2$, le théorème de Ngô sur $\mathbb{A}^{\mathrm{ell}}$ et notre extension à $\mathbb{A}^{\mathrm{grss}}$ disent que la cohomologie relative de $f^{\mathrm{grss}}:N^{\mathrm{grss}}\rightarrow\mathbb{A}^{\mathrm{grss}}$ (pour un schéma $N^{\mathrm{grss}}\supset N^{\mathrm{ell}}$ convenablement défini à partir de la restriction de $\mathcal{N}$ à $\mathbb{A}^{\mathrm{grss}}$) est complètement déterminée par sa restriction à n'importe quel ouvert non vide de $\mathbb{A}^{\mathrm{grss}}$.
\medskip

L'objet de cette note est de prolonger ce dernier résultat à $\mathbb{A}$ tout entier.

\section{La fibration de Hitchin}

Soient $n$ un entier positif (le rang) et $e$ un entier premier à $n$ (le degré).
\medskip

Soit $k$ un corps algébriquement clos, soit $X$ une courbe connexe, projective et lisse sur $k$, de genre $g$, et soit $D=\sum_{x}d_{x}[x]$ un diviseur sur $X$ que l'on suppose effectif ($d_{x}\geq 0,~\forall x$) et de degré
$$
d=\sum_{x}d_{x}>2g-2
$$
(voir la section \ref{Canonique} pour des commentaires sur le cas où $D$ est un diviseur canonique et donc $d=2g-2$).
\medskip

Soit enfin $\ell$ un nombre premier inversible dans $k$ (pour la cohomologie $\ell$-adique).
\medskip

Rappelons \cite{Hitchin} qu'un fibré de Hitchin est un couple $(\mathcal{E},\theta)$ formé d'un fibré vectoriel $\mathcal{E}$ sur $X$, de rang $n$ et de degré $e$, et d'un endomorphisme tordu $\theta :\mathcal{E}\rightarrow \mathcal{E}(D):=\mathcal{E}\otimes_{\mathcal{O}_{X}}\mathcal{O}_{X}(D)$ de ce fibré vectoriel. Rappelons aussi que le champ $\mathcal{N}_{n}^{e}$ des fibrés de Hitchin est algébrique et localement de type fini sur $k$.
\medskip

La base de la fibration de Hitchin est le schéma affine
$$
\mathbb{A}_{n}=\bigoplus_{i=1}^{n}H^{0}(X,\mathcal{O}_{X}(iD));
$$
sa dimension $d_{\mathbb{A}_{n}}$ peut être calculée par le théorème de Riemann-Roch et est égale à
$$
d_{\mathbb{A}_{n}}=n(1-g)+\frac{n(n+1)}{2}d.
$$

La fibration de Hitchin est le morphisme de champs algébriques
\begin{equation*}
f_{n}:\mathcal{N}_{n}^{e}\rightarrow \mathbb{A}_{n}
\end{equation*}
qui envoie $(\mathcal{E},\theta)$ sur le polynôme caractéristique de $\theta$
$$
a=(-\mathrm{tr}(\theta),\dots,(-1)^{n}\mathrm{det}(\theta)).
$$

\section{Courbes spectrales}

Soit
$$
p:\Sigma=\mathbb{V}(\mathcal{O}_{X}(-D))\rightarrow X
$$
l'espace total du fibré en droites $\mathcal{O}_{X}(D)$ et $u$ la section universelle de $p^{\ast}\mathcal{O}_{X}(D)$. Tout $a\in\mathbb{A}_{n}$ définit une section globale
$$
P_{a}(u)=u^{n}+p^{\ast}(a_{1})u^{n-1}+\cdots+p^{\ast}(a_{n})\in H^{0}(\Sigma,p^{\ast}\mathcal{O}_{X}(nD)).
$$
La courbe spectrale
$$
X_{a}=X_{n,a}\subset\Sigma
$$
est le diviseur de Cartier des zéros de cette section $P_{a}(u)$. Un théorème de Bertini assure que $X_{a}$ est connexe, mais elle n'est pas nécessairement irréductible, ni réduite. La restriction $\pi_{a}:X_{a}\rightarrow X$ de $p$ à $X_{a}$ est un morphisme fini et plat de degré $n$, et on a
$$
\pi_{a,\ast}\mathcal{O}_{X_{a}}=\mathcal{O}_{X}\oplus\mathcal{O}_{X}(-D)\oplus\cdots\oplus\mathcal{O}_{X}(-(n-1)D),
$$
de sorte que
$$
\chi(X_{a},\mathcal{O}_{X_{a}})=n(1-g)-\frac{n(n-1)}{2}d.
$$

Pour $a\in\mathbb{A}_{n}$, soit $X_{a,\eta}$ la fibre de $\pi_{a}:X_{a}\rightarrow X$ au point générique $\eta$ de $X$ et $j_{a}:X_{a,\eta}\hookrightarrow X_{a}$ l'inclusion; $X_{a,\eta}$ est une réunion disjointe finie de schémas artiniens, un pour chaque point générique de $X_{a}$.
\medskip

Suivant la définition donnée par Schaub \cite{Schaub}, un module sans torsion de rang $1$ sur $X_{a}$ est un $\mathcal{O}_{X_{a}}$-module cohérent $\mathcal{F}$ tel que l'homomorphisme canonique
$$
\mathcal{F}\rightarrow j_{a,\ast}j_{a}^{\ast}\mathcal{F}
$$
est injectif et tel qu'en chaque point générique de $X_{a}$, les fibres de $\mathcal{F}$ et $\mathcal{O}_{X_{a}}$ ont la même longueur.
\medskip

Si $X_{a}$ est réduite, c'est la notion habituelle; en particulier, $\mathcal{F}$ est localement libre de rang $1$ sur le lieu lisse de $X_{a}$.
\medskip

Si $\mathcal{F}$ est un module sans torsion de rang $1$ sur $X_{a}$,
$$
{\mathcal{E}_{\mathcal{F}}=\pi_{a,\ast}\mathcal{F}}
$$
est un fibré vectoriel de rang $n$ et la multiplication par $u$ le munit d'un endomorphisme tordu ${\theta_{\mathcal{F}}:\mathcal{E}_{\mathcal{F}}\rightarrow\mathcal{E}_{\mathcal{F}}(D)}$. On vérifie à l'aide du théorème de Riemann-Roch que
$$
\mathrm{deg}(\mathcal{F}):=\chi(X_{a},\mathcal{F})-\chi(X_{a},\mathcal{O}_{X_{a}})=\mathrm{deg}(\mathcal{E}_{\mathcal{F}})+\frac{n(n-1)}{2}d.
$$

\begin{theorem}[Hitchin \cite{Hitchin} si $X_{a}$ est lisse; Beauville, Narasimhan et Ramanan \cite{Beauville} si $X_{a}$ est intègre; Schaub \cite{Schaub} en général]
Le foncteur $\mathcal{F}\mapsto(\mathcal{E}_{\mathcal{F}},\theta_{\mathcal{F}})$ est un isomorphisme du champ modulaire des modules sans torsion de rang $1$ et degré $e+\frac{n(n-1)}{2}d$ sur $X_{a}$, sur le champ algébrique $f_{n}^{-1}(a)\subset\mathcal{N}_{n}^{e}$.
\end{theorem}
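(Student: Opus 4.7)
Le morphisme $\pi_a$ étant fini, donc affine, $\pi_{a,\ast}$ induit une équivalence entre les $\mathcal{O}_{X_a}$-modules cohérents et les $\pi_{a,\ast}\mathcal{O}_{X_a}$-modules cohérents sur $X$. La preuve repose alors sur la construction d'un foncteur quasi-inverse: à $(\mathcal{E},\theta)$ de polynôme caractéristique $a$, on associe $\mathcal{F}$ en munissant $\mathcal{E}$ d'une structure de $\pi_{a,\ast}\mathcal{O}_{X_a}$-module. La présentation $\Sigma=\underline{\mathrm{Spec}}(\mathrm{Sym}^{\bullet}\mathcal{O}_X(-D))$ montre que $\theta$ équivaut à une action de $\mathrm{Sym}^{\bullet}\mathcal{O}_X(-D)$ sur $\mathcal{E}$, où le générateur agit par $\theta$. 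Le théorème de Cayley-Hamilton, appliqué à l'endomorphisme tordu $\theta$, fournit l'égalité $P_a(\theta)=0$ dans $\mathrm{Hom}(\mathcal{E},\mathcal{E}(nD))$; cette action se factorise donc à travers $\pi_{a,\ast}\mathcal{O}_{X_a}$, ce qui définit le faisceau cherché $\mathcal{F}$ sur $X_a$, avec $\pi_{a,\ast}\mathcal{F}=\mathcal{E}$ et $\theta_{\mathcal{F}}=\theta$ tautologiquement.

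\textbf{Vérifications.} Il faut alors montrer que $\mathcal{F}$ est sans torsion de rang $1$ au sens de Schaub. L'injectivité $\mathcal{F}\hookrightarrow j_{a,\ast}j_a^{\ast}\mathcal{F}$ traduit, via l'équivalence de catégories, celle, évidente, de $\mathcal{E}$ dans sa fibre générique (le fibré $\mathcal{E}$ étant localement libre sur la courbe lisse $X$). La condition sur les longueurs génériques, composante par composante aux points génériques de $X_a$, se ramène à l'analyse de $\mathcal{E}_{\eta}$ comme module sur l'algèbre artinienne $\mathcal{O}_{X_{a,\eta}}=k(X)[u]/(P_a(u))$, où l'égalité des $k(X)$-dimensions ($=n$) et la factorisation de $P_a$ permettent de conclure en décomposant en produit des facteurs artiniens locaux correspondant aux points génériques de $X_a$. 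Réciproquement, pour $\mathcal{F}$ sans torsion de rang $1$, la platitude de $\pi_a$ et la lissité de $X$ garantissent que $\pi_{a,\ast}\mathcal{F}$ est localement libre de rang $n$, et le caractère cyclique de $\mathcal{F}_{\eta}$ sur $\mathcal{O}_{X_{a,\eta}}$ assure que le polynôme caractéristique de $\theta_{\mathcal{F}}$ coïncide avec son polynôme minimal et vaut précisément $P_a$. L'égalité $\mathrm{deg}(\mathcal{F})=e+\tfrac{n(n-1)}{2}d$ découle alors directement de la formule de Riemann-Roch rappelée juste avant l'énoncé.

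\textbf{Principale difficulté.} L'obstacle essentiel, qui distingue le cas général traité par Schaub des situations particulières antérieures (Hitchin pour $X_a$ lisse, Beauville-Narasimhan-Ramanan pour $X_a$ intègre), est le traitement de $X_a$ non réduite. On doit alors analyser finement la structure des modules sur l'algèbre artinienne non semi-simple $k(X)[u]/(P_a(u))$, et la condition de longueur composante par composante (plutôt qu'une simple égalité de longueur totale) s'avère indispensable pour obtenir une véritable équivalence de catégories et non seulement une correspondance dimensionnelle.
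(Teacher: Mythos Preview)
The paper does not give a proof of this theorem; it is stated with attribution to Hitchin, Beauville--Narasimhan--Ramanan, and Schaub, and used as input. Your sketch is the standard BNR argument (Cayley--Hamilton to factor the $\mathrm{Sym}^{\bullet}\mathcal{O}_X(-D)$-action through $\pi_{a,\ast}\mathcal{O}_{X_a}$, plus the affine-pushforward equivalence), and it is correct in outline.

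One step is wrong as written, though the conclusion survives. You claim that $\mathcal{F}_{\eta}$ is \emph{cyclique} over $\mathcal{O}_{X_{a,\eta}}$ and hence that the minimal and characteristic polynomials of $\theta_{\mathcal{F}}$ coincide. In the non-reduced case this fails: over a local factor $A_i=k(X)[u]/(Q_i^{m_i})$ with $m_i\geq 2$, a module of length $m_i$ need not be cyclic (take the residue field to the $m_i$-th power, on which $u$ acts semisimply; its minimal polynomial is $Q_i$, not $Q_i^{m_i}$). What is true, and sufficient, is that $\mathrm{length}_{A_i}(M_i)=m_i$ forces $\dim_{k(X)}M_i=m_i\deg Q_i$, and since every eigenvalue of $u$ on $M_i$ is a root of $Q_i$, the characteristic polynomial of $u|_{M_i}$ is $Q_i^{m_i}$; the product over $i$ gives $P_a$ without invoking cyclicity.

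Symmetrically, in the forward direction the phrase ``l'égalité des $k(X)$-dimensions ($=n$) et la factorisation de $P_a$ permettent de conclure'' is too compressed. The total dimension $n$ alone does not pin down the length at each generic point; you must use that the characteristic polynomial of $\theta$ on $\mathcal{E}_\eta$ is exactly $\prod_i Q_i^{m_i}$, which forces $\dim_{k(X)}M_i=m_i\deg Q_i$ and hence $\mathrm{length}_{A_i}(M_i)=m_i$ factor by factor. With these two adjustments your argument is complete.
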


\section{Symétries}

Soit $\mathcal{P}_{n}$ la composante de degré $0$ du champ de Picard relatif de la courbe spectrale universelle (le diviseur de Cartier relatif dans $\mathbb{A}_{n}\times_{k}\Sigma/\mathbb{A}_{n}$ défini par l'équation $P_{a}(u)=0$): pour chaque $a\in\mathbb{A}_{n}$, $\mathcal{P}_{n,a}$ est le champ de Picard des modules inversibles de degré $0$ sur $X_{a}$. 
\medskip

Ce champ de Picard $\mathcal{P}_{n}$ agit sur $\mathcal{N}_{n}^{e}$: si $\mathcal{L}\in\mathcal{P}_{n,a}$ et $\mathcal{F}$ est un module sans torsion de rang $1$ sur $X_{a}$, l'action est définie par
$$
\mathcal{L}\cdot(\mathcal{E}_{\mathcal{F}},\theta_{\mathcal{F}})
=(\mathcal{E}_{\mathcal{L}\otimes\mathcal{F}},\theta_{\mathcal{L}\otimes\mathcal{F}}).
$$

Le champ algébrique $\mathcal{P}_{n}$ n'est ni séparé, ni de type fini, mais sa composante neutre $\mathcal{J}_{n}\subset\mathcal{P}_{n}$ admet un espace grossier $J_{n}$ qui est un schéma en groupes, séparé, lisse et de type fini sur $\mathbb{A}_{n}$ \cite{Bosch}. Pour chaque $a\in\mathbb{A}_{n}$, $J_{n,a}$ est le schéma en groupes des classes d'isomorphie de fibrés inversibles sur $X_{a}$ dont la restriction à chaque composante irreducible de $X_{a}$ est de degré $0$. 
\medskip

Soit $\mathbb{A}_{n}^{\mathrm{lisse}}\subset\mathbb{A}_{n}$ l'ouvert dense au-dessus duquel la courbe spectrale est lisse. La restriction $J_{n}^{\mathrm{lisse}}=J_{n}|\mathbb{A}_{n}^{\mathrm{lisse}}$ est un schéma abélien. La restriction $\mathcal{N}_{n}^{e}|\mathbb{A}_{n}^{\mathrm{lisse}}$ admet un espace de module grossier $f_{n}^{\mathrm{lisse}}: N_{n}^{e,\mathrm{lisse}}\rightarrow\mathbb{A}_{n}^{\mathrm{lisse}}$ qui est un torseur sous $J_{n}^{\mathrm{lisse}}$. Les morphismes $f_{n}^{\mathrm{lisse}}$ et $J_{n}^{\mathrm{lisse}}\rightarrow\mathbb{A}_{n}^{\mathrm{lisse}}$ sont tous les deux lisses, purement de dimension relative
$$
d_{f_{n}}=n(g-1)+\frac{n(n-1)d}{2}+1.
$$
Par suite le faisceau $\ell$-adique $R^{1}f_{n,\ast}^{\mathrm{lisse}}\mathbb{Q}_{\ell}$ est un système local de rang $2d_{f_{n}}$ et
$$
R^{i}f_{n,\ast}^{\mathrm{lisse}}\mathbb{Q}_{\ell}=\bigwedge^{i}R^{1}f_{n,\ast}^{\mathrm{lisse}}\mathbb{Q}_{\ell},~\forall i=0,1,\dots,2d_{f_{n}}.
$$

\section{Rappels sur les faisceaux pervers}

Soit $A$ un schéma de type fini sur $k$. Dans la catégorie dérivée $D_{\mathrm{c}}^{\mathrm{b}}(A,\mathbb{Q}_{\ell})$ des faisceaux $\ell$-adiques sur $A$, on a la sous-catégorie pleine des faisceaux pervers $\mathrm{Perv}(A,\mathbb{Q}_{\ell})$ définie par Beilinson, Bernstein, Deligne et Gabber \cite{Beilinson}.
\medskip

La catégorie $\mathrm{Perv}(A,\mathbb{Q}_{\ell})$ est abélienne. La dualité de Poincaré sur $D_{\mathrm{c}}^{\mathrm{b}}(A,\mathbb{Q}_{\ell})$ envoie $\mathrm{Perv}(A,\mathbb{Q}_{\ell})$ dans elle-même. On a des foncteurs de cohomologie ${}^{\mathrm{p}}\mathcal{H}^{i}:D_{\mathrm{c}}^{\mathrm{b}}(A,\mathbb{Q}_{\ell})\rightarrow\mathrm{Perv}(A,\mathbb{Q}_{\ell})$ qui vérifient les propriétés usuelles (suite exacte longue de cohomologie, ...).
\medskip

Tous les objets de $\mathrm{Perv}(A,\mathbb{Q}_{\ell})$ sont de longueur finie.
Les objets simples sont les faisceaux pervers de la forme
$$
{i_{a,\ast}\mathrm{IC}_{\overline{\{a\}},\mathcal{L}}[\mathrm{d}_{a}]}
$$
où $a\in A$ (un point au sens de Zariski, non fermé en général),
$$
i_{a}:\overline{\{a\}}\hookrightarrow A
$$
est l'adhérence de Zariski de $a$ dans $A$, $\mathrm{d}_{a}$ est la dimension de $\overline{\{a\}}$, $\mathcal{L}$ est un système local $\ell$-adique irréductible sur $\{a\}$ qui se prolonge à un voisinage ouvert de $a$ dans $\overline{\{a\}}$, et $\mathrm{IC}_{\overline{\{a\}},\mathcal{L}}$ est le complexe d'intersection de $\overline{\{a\}}$ à valeurs dans $\mathcal{L}$ (en particulier, $\mathrm{IC}_{\overline{\{a\}},\mathcal{L}}|\{a\}=\mathcal{L}[0]$).
\medskip

Tout faisceau pervers semi-simple $K$ sur $A$ admet donc une décomposition
$$
K\cong \bigoplus_{a\in A}i_{a,\ast}\mathrm{IC}_{\overline{\{a\}},\mathcal{L}_{a}}[\mathrm{d}_{a}]
$$
où pour chaque $a$, $\mathcal{L}_{a}$ est un système local $\ell$-adique semi-simple sur $\{a\}$ qui se prolonge à un voisinage ouvert de $a$ dans $\overline{\{a\}}$. 
\medskip

Un complexe $K\in D_{\mathrm{c}}^{\mathrm{b}}(A,\mathbb{Q}_{\ell})$ est dit semi-simple si
$$
K\cong\bigoplus_{i}{}^{\mathrm{p}}\mathcal{H}^{i}K[-i]
$$
et si chaque ${}^{\mathrm{p}}\mathcal{H}^{i}K$ est un faisceau pervers semi-simple. Un tel complexe admet une décomposition
$$
K\cong\bigoplus_{a\in A}\bigoplus_{i}i_{a,\ast}\mathrm{IC}_{\overline{\{a\}},\mathcal{L}_{a}^{i}}[\mathrm{d}_{a}-i]
$$
où pour chaque $a$ et chaque $i$, $\mathcal{L}_{a}^{i}$ est un système local $\ell$-adique semi-simple sur $\{a\}$ qui se prolonge à un voisinage ouvert de $a$ dans $\overline{\{a\}}$. On note alors
$$
{\mathrm{Socle}(K)=\{a\in A\mid\exists i\hbox{ tel que }\mathcal{L}_{a}^{i}\not=(0)\}},
$$
et pour chaque $a\in\mathrm{Socle}(K)$,
$$
n_{a}^{+}(K)=\mathrm{Sup}\{i\mid \mathcal{L}_{a}^{i}\not=(0)\}\hbox{ et }n_{a}^{-}(K)=\mathrm{Inf}\{i\mid \mathcal{L}_{a}^{i}\not=(0)\}.
$$
L'amplitude en $a$ de $K$ est l'entier
$$
\mathrm{Amp}_{a}(K)=n_{a}^{+}(K)-n_{a}^{-}(K)
$$
Si $K$ est auto-dual, on a $n_{a}^{+}(K)=-n_{a}^{-}(K)$ et donc $\mathrm{Amp}_{a}(K)=2n_{a}^{+}(K)$.
\smallskip

Un résultat fondamental de la théorie des faisceaux pervers est le théorème de décomposition. 

\begin{theorem}[Beilinson, Bernstein, Deligne et Gabber \cite{Beilinson}]\label{decomposition}
Soient $N$ un schéma lisse purement de dimension $d_{N}$ sur $k$ et $f:N\rightarrow A$ un morphisme propre. Alors le complexe auto-dual $Rf_{\ast}^{\mathrm{ell}}\mathbb{Q}_{\ell}[d_{N}]$ est semi-simple.
\end{theorem}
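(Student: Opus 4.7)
La stratégie consiste à ramener l'énoncé à la théorie des poids de Deligne. On commencera par un argument standard d'épandage: on fixe un modèle de $(N,A,f)$ sur une $\mathbb{Z}$-algèbre de type fini, puis on spécialise en un point fermé convenable pour se ramener au cas $k=\overline{\mathbb{F}_{q}}$, la semi-simplicité sur ce dernier corps entraînant le cas général par constructibilité et changement de base lisse. Après descente galoisienne, il suffit de travailler au-dessus d'un corps fini $\mathbb{F}_{q}$.

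La première étape géométrique sera la \emph{pureté} de $K=Rf_{*}\mathbb{Q}_{\ell}[d_{N}]$. Puisque $N$ est lisse purement de dimension $d_{N}$, le complexe $\mathbb{Q}_{\ell}[d_{N}]$ est pur: c'est, à un décalage près, le complexe d'intersection de $N$. Le théorème principal de Weil~II de Deligne assurera alors que l'image directe par un morphisme propre préserve la pureté; il en résultera que $K$ est un complexe pur sur $A$, et par suite chaque faisceau de cohomologie perverse ${}^{\mathrm{p}}\mathcal{H}^{i}K$ sera un faisceau pervers pur.

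La seconde étape exploitera le théorème-clé de Beilinson, Bernstein, Deligne et Gabber selon lequel tout faisceau pervers pur sur un schéma de type fini sur $\mathbb{F}_{q}$ est géométriquement semi-simple. Appliqué aux ${}^{\mathrm{p}}\mathcal{H}^{i}K$, ceci fournira la semi-simplicité perverse attendue. Pour conclure la semi-simplicité de $K$ au sens adopté plus haut, il restera à établir la dégénérescence $K\cong\bigoplus_{i}{}^{\mathrm{p}}\mathcal{H}^{i}K[-i]$: cela résultera encore d'un argument de poids, les obstructions vivant dans des groupes d'extensions $\mathrm{Ext}^{2}$ entre faisceaux pervers purs dont les poids diffèrent strictement, lesquels sont nuls en vertu de Weil~II.

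L'obstacle principal sera précisément la preuve de la semi-simplicité géométrique des faisceaux pervers purs. Elle n'a rien de formel: elle repose sur une récurrence délicate sur la dimension du support, sur la théorie des poids en caractéristique mixte, et sur l'étude fine de l'action du Frobenius via sa semi-simplification. Elle constitue le cœur technique de \cite{Beilinson}, et c'est sous cette forme qu'on invoquera directement le résultat.
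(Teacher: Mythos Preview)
Le papier ne donne aucune démonstration de cet énoncé: c'est le théorème de décomposition, simplement cité comme résultat fondateur tiré de \cite{Beilinson}. Ton esquisse reproduit fidèlement l'argument original de Beilinson--Bernstein--Deligne--Gabber (épandage et réduction aux corps finis, pureté de l'image directe propre via Weil~II, semi-simplicité géométrique des faisceaux pervers purs, puis scindage de la filtration perverse par un argument de poids); il n'y a donc rien à comparer.
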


\section{Le théorème de Ngô sur le lieu elliptique}

Le lieu elliptique est l'ouvert $\mathbb{A}_{n}^{\mathrm{ell}}\subset\mathbb{A}_{n}$ au-dessus duquel $X_{a}$ est intègre; il contient le lieu lisse.
\medskip

Nous savons d'après les travaux de Altmann et Kleiman \cite{Altmann}, qu'au-dessus du lieu elliptique, l'espace grossier du champ $\mathcal{N}_{n}^{e}$ est un schéma $N_{n}^{e,\mathrm{ell}}$ sur lequel le schéma en groupes lisse $J_{n}^{\mathrm{ell}}=J_{n}|\mathbb{A}_{n}^{\mathrm{ell}}$ agit. De plus, le schéma $N_{n}^{e,\mathrm{ell}}$ est lisse de dimension $d_{N_{n}^{e,\mathrm{ell}}}=d_{\mathbb{A}_{n}}+d_{f_{n}}=n^{2}d+1$ sur $k$, et la fibration de Hitchin $f_{n}^{\mathrm{ell}}:N_{n}^{e,\mathrm{ell}}\rightarrow \mathbb{A}_{n}^{\mathrm{ell}}$ est plate, projective, à fibres connexes de dimension $d_{f_{n}}$.

Comme $N_{n}^{e,\mathrm{ell}}$ est lisse sur $k$ et $f_{n}^{\mathrm{ell}}:N_{n}^{e,\mathrm{ell}}\rightarrow\mathbb{A}_{n}^{\mathrm{ell}}$ est propre, le complexe auto-dual $Rf_{n,\ast}^{\mathrm{ell}}\mathbb{Q}_{\ell}[d_{N_{n}^{e,\mathrm{ell}}}]$ est semi-simple d'après le théorème de décomposition.
\medskip

Dans le cas particulier considéré, le théorème cohomologique principal de Ngô dans sa preuve du lemme fondamental de Langlands-Shelstad s'énonce:

\begin{theorem}[Ngô \cite{Ngo}]\label{support}
Si $k$ est de caractéristique nulle, le socle du complexe auto-dual $Rf_{n,\ast}^{\mathrm{ell}}\mathbb{Q}_{\ell}[d_{N_{n}^{e,\mathrm{ell}}}]$ est réduit au point générique de $\mathbb{A}_{n}$. En d'autres termes on a un isomorphisme
$$
{Rf_{n,\ast}^{\mathrm{ell}}\mathbb{Q}_{\ell}\cong\bigoplus_{i}\mathrm{IC}_{\mathbb{A}_{n}^{\mathrm{ell}},R^{i}f_{n,\ast}^{\mathrm{lisse}}\mathbb{Q}_{\ell}}[-i]}.
$$
\end{theorem}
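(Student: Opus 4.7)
Mon plan est de ramener l'énoncé au contrôle des supports des constituants simples du complexe semi-simple $Rf_{n,\ast}^{\mathrm{ell}}\mathbb{Q}_{\ell}[d_{N_{n}^{e,\mathrm{ell}}}]$, en montrant que tout tel constituant a pour support $\mathbb{A}_{n}^{\mathrm{ell}}$ tout entier. Comme $\mathbb{A}_{n}$ est irréductible et que $\mathbb{A}_{n}^{\mathrm{ell}}\subset\mathbb{A}_{n}$ est un ouvert dense, cela équivaut bien à dire que le socle est réduit au point générique de $\mathbb{A}_{n}$. L'isomorphisme final en découlera par extension intermédiaire : sur l'ouvert lisse $\mathbb{A}_{n}^{\mathrm{lisse}}$, le morphisme $f_{n}^{\mathrm{lisse}}$ étant lisse et propre à fibres abéliennes, le théorème de Deligne donne $Rf_{n,\ast}^{\mathrm{lisse}}\mathbb{Q}_{\ell}\cong\bigoplus_{i}R^{i}f_{n,\ast}^{\mathrm{lisse}}\mathbb{Q}_{\ell}[-i]$, et les constituants simples, tous à support $\mathbb{A}_{n}^{\mathrm{ell}}$, se reconstituent comme les prolongements intermédiaires de ces systèmes locaux.

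Pour contrôler les supports, j'exploiterais à la manière de Ngô l'action du schéma en groupes commutatif lisse $J_{n}^{\mathrm{ell}}$ sur $N_{n}^{e,\mathrm{ell}}$. Par changement de base propre, cette action munit $Rf_{n,\ast}^{\mathrm{ell}}\mathbb{Q}_{\ell}$ d'une structure de module gradué sur $\bigoplus_{i}R^{i}h_{n,\ast}^{\mathrm{ell}}\mathbb{Q}_{\ell}$, où $h_{n}^{\mathrm{ell}}:J_{n}^{\mathrm{ell}}\rightarrow\mathbb{A}_{n}^{\mathrm{ell}}$. Fibre à fibre, la décomposition de Chevalley $0\rightarrow A_{a}\rightarrow J_{n,a}^{0}\rightarrow B_{a}\rightarrow 0$ produit une partie affine $A_{a}$ de dimension $\delta(a)$ (l'invariant $\delta$ de la courbe intègre $X_{a}$) et une variété abélienne quotient $B_{a}$ de dimension $d_{f_{n}}-\delta(a)$.

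L'étape décisive, qui constitue l'obstacle principal, est l'inégalité de $\delta$-régularité : la strate $\mathbb{A}_{n}^{\mathrm{ell},\delta}\subset\mathbb{A}_{n}^{\mathrm{ell}}$ où $\delta(a)=\delta$ est de codimension au moins $\delta$ dans $\mathbb{A}_{n}^{\mathrm{ell}}$. Dans le cas de $\mathrm{GL}(n)$, cette inégalité s'obtient par un dénombrement précis des déformations verselles des singularités planes de $X_{a}$, via l'identification des strates $\delta$ locales avec des fibres de Springer affines et un calcul de codimension dans la grassmannienne affine. C'est la partie où je m'attendrais à concentrer le plus de soin technique.

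Une fois la $\delta$-régularité acquise, je conclurais par l'argument de support de Ngô. Soit $i_{a_{0},\ast}\mathrm{IC}_{\overline{\{a_{0}\}},\mathcal{L}}[\mathrm{d}_{a_{0}}-i]$ un constituant simple, $Z=\overline{\{a_{0}\}}$ son support et $c$ sa codimension dans $\mathbb{A}_{n}^{\mathrm{ell}}$. D'une part, un lemme de liberté (conséquence de la pureté du théorème de décomposition et du fait que $H^{\ast}(B_{a_{0}},\mathbb{Q}_{\ell})$ est une algèbre extérieure libre engendrée en degré $1$) entraîne que $H^{\ast}(B_{a_{0}},\mathbb{Q}_{\ell})$ agit librement sur la fibre en $a_{0}$ du constituant, ce qui impose $\mathrm{Amp}_{a_{0}}\geq 2(d_{f_{n}}-\delta(a_{0}))$. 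D'autre part, les inégalités standard sur les amplitudes pour un IC sur $Z$ extrait d'un complexe auto-dual provenant d'un morphisme propre à source lisse de dimension relative $d_{f_{n}}$ fournissent $\mathrm{Amp}_{a_{0}}\leq 2(d_{f_{n}}-c)$. Avec la $\delta$-régularité $c\geq\delta(a_{0})$, on obtient l'encadrement $c=\delta(a_{0})$ ; un raffinement final exploitant que $X_{a_{0}}$ est intègre (de sorte que $\pi_{0}(\mathcal{P}_{n,a_{0}})\cong\mathbb{Z}$ est sans torsion) permet alors d'exclure le cas $c>0$, et force $Z=\mathbb{A}_{n}^{\mathrm{ell}}$ comme souhaité.
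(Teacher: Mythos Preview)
Your overall strategy is correct and matches Ng\^o's general argument, but the paper's proof is organized differently at two places, and one step of your plan is muddled.

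\textbf{On the $\delta$-regularity.} The paper does not prove it by any computation in the affine Grassmannian or via affine Springer fibers; those objects control the dimension of Hitchin fibres, not the codimension of $\delta$-strata in the base. In the paper the $\delta$-regularity is exactly the Severi inequality $\mathrm{d}_{\mathbb{A}_{n}}-\mathrm{d}_{a}\geq\delta_{X_{a}}$, quoted as a black box from Diaz--Harris (deformation theory of plane curve singularities). This is also why the characteristic-zero hypothesis enters. Your parenthetical ``d\'eformations verselles des singularit\'es planes'' is the right idea; the Springer/Grassmannian route is not.

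\textbf{On the endgame.} After combining Ng\^o's amplitude inequality $d_{f_{n}}-d_{\mathbb{A}_{n}}+d_{a}\geq n_{a}^{+}\geq \mathrm{d}_{a}^{\mathrm{ab}}(J_{n})$ with Severi $\mathrm{d}_{a}^{\mathrm{ab}}(J_{n})\geq d_{f_{n}}-d_{\mathbb{A}_{n}}+d_{a}$, the paper gets equalities throughout. The equality $n_{a}^{+}=d_{f_{n}}-d_{\mathbb{A}_{n}}+d_{a}$ says precisely that $R^{2d_{f_{n}}}f_{n,\ast}^{\mathrm{ell}}\mathbb{Q}_{\ell}$ has a nonzero direct summand supported on $\overline{\{a\}}$. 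Since the fibres of $f_{n}^{\mathrm{ell}}$ are irreducible (Altman--Kleiman), this top sheaf is the constant sheaf $\mathbb{Q}_{\ell}$, forcing $a$ to be the generic point. You instead appeal to $\pi_{0}(\mathcal{P}_{n,a})$ being torsion-free, which is Ng\^o's general-$G$ mechanism; it works, but the paper's argument is shorter and specific to $\mathrm{GL}(n)$: irreducibility of compactified Jacobians replaces the $\kappa$-decomposition entirely.
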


Les ingrédients principaux de la preuve sont:
\medskip

\begin{itemize}
\item l'action de $J_{n}^{\mathrm{ell}}$ sur la partie elliptique de la fibration de Hitchin est suffisamment libre pour borner inférieurement l'amplitude en $a$ de $Rf_{n,\ast}^{\mathrm{ell}}\mathbb{Q}_{\ell}[d_{N_{n}^{e,\mathrm{ell}}}]$, et donc supérieurement la codimension de $a$ dans $\mathbb{A}_{n}^{\mathrm{ell}}$, pour chaque $a\in \mathrm{Socle}(Rf_{n,\ast}^{\mathrm{ell}}\mathbb{Q}_{\ell}[d_{N_{n}^{e,\mathrm{ell}}}])$.
\medskip

\item une inégalité de type Severi qui borne inférieurement la codimension de tout point dans $\mathrm{Socle}(Rf_{n,\ast}^{\mathrm{ell}}\mathbb{Q}_{\ell}[d_{N_{n}^{e,\mathrm{ell}}}])$ (pour le moment, cette inégalité n'est disponible en toute généralité qu'en caractéristique nulle).
\end{itemize}
\medskip

Plus précisément, soient $A$ un schéma séparé de type fini sur $k$ et soit $J$ un schéma en groupes commutatifs, lisse, de type fini et à fibres connexes sur $A$.

Pour chaque $a\in A$ et chaque point géométrique $\overline{a}\rightarrow a$ on a un dévissage
$$
{0\rightarrow J_{\overline{a}}^{\mathrm{aff}}\rightarrow J_{\overline{a}}\rightarrow J_{\overline{a}}^{\mathrm{ab}}\rightarrow 0}
$$
où $J_{\overline{a}}^{\mathrm{ab}}$ est une variété abélienne et $J_{\overline{a}}^{\mathrm{aff}}$ est affine. Les dimensions de $J_{\overline{a}}^{\mathrm{aff}}$ et $J_{\overline{a}}^{\mathrm{ab}}$ dépendent seulement de $a$ et sont notées dans la suite ${\mathrm{d}_{a}^{\mathrm{aff}}(J)}$ et ${\mathrm{d}_{a}^{\mathrm{ab}}(J)}$.

Soit $f:N\rightarrow A$ un $A$-schéma, muni d'une action de $J$.

\begin{theorem}[Ngô \cite{Ngo}]\label{technique}
On suppose que $f:N\rightarrow A$ est projectif, purement de dimension relative $\mathrm{d}_{f}$, que $N$ est lisse sur $k$ (purement de dimension $d_{N}=d_{A}+d_{f}$), que les stabilisateurs dans $J$ des points fermés dans $N$ sont tous affines, et que le module de Tate $V_{\ell}(J)$ est polarisable. Alors pour tout $a\in\mathrm{Socle}(Rf_{\ast}\mathbb{Q}_{\ell}[d_{N}])$, on a
$$
2d_{N}-d_{f}+d_{a}\geq\frac{1}{2}\mathrm{Amp}_{a}(Rf_{\ast}\mathbb{Q}_{\ell}[d_{N}])\geq\mathrm{d}_{a}^{\mathrm{ab}}(J),
$$
soit encore
$$
\mathrm{d}_{f}-\mathrm{d}_{A}+\mathrm{d}_{a}\geq n_{a}^{+}(Rf_{\ast}\mathbb{Q}_{\ell}[d_{N}])\geq \mathrm{d}_{a}^{\mathrm{ab}}(J).
$$
\end{theorem}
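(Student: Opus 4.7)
La démonstration se scinde en deux parties: une majoration et une minoration de $n_a^+(Rf_\ast\mathbb{Q}_\ell[d_N])=\frac{1}{2}\mathrm{Amp}_a$, l'égalité valant par auto-dualité (théorème \ref{decomposition} appliqué au morphisme propre $f$ de source lisse $N$). Ces deux inégalités reposent sur des ingrédients de nature très différente.

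La majoration $\mathrm{d}_f-\mathrm{d}_A+\mathrm{d}_a\geq n_a^+$ n'utilise pas l'action de $J$: c'est une estimée de support générale pour l'image directe d'un morphisme propre de source lisse. Plus précisément, je décomposerais $Rf_\ast\mathbb{Q}_\ell[d_N]$ en facteurs simples $i_{b,\ast}\mathrm{IC}_{\overline{\{b\}},\mathcal{L}}[\mathrm{d}_b-i]$ et évaluerais chaque stalk en $a$; la concentration des stalks de $Rf_\ast\mathbb{Q}_\ell$ en degrés $[0,2\mathrm{d}_f]$, conjuguée au décalage $[d_N]$ et à l'inégalité $\mathrm{d}_b\geq\mathrm{d}_a$ pour $a\in\overline{\{b\}}$, force alors $n_a^+-\mathrm{d}_a\leq 2\mathrm{d}_f-d_N=\mathrm{d}_f-\mathrm{d}_A$.

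Le cœur du théorème est la minoration $n_a^+\geq\mathrm{d}_a^{\mathrm{ab}}(J)$, qui exploite conjointement l'action de $J$, l'hypothèse sur les stabilisateurs et la polarisabilité de $V_\ell(J)$. Je procéderais en trois temps. Premièrement, le dévissage de Chevalley $0\to J_{\bar a}^{\mathrm{aff}}\to J_{\bar a}\to J_{\bar a}^{\mathrm{ab}}\to 0$ combiné à l'invariance homotopique de la cohomologie $\ell$-adique (les unipotents n'agissent pas, les tores n'agissent qu'en bas degré) permet de réduire l'action cohomologique de $J$ à celle de sa partie abélienne $J^{\mathrm{ab}}$. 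Deuxièmement, l'hypothèse de stabilisateurs affines assure que l'action induite de $J_{\bar a}^{\mathrm{ab}}$ sur $N_{\bar a}$ est \emph{libre en cohomologie}, fournissant un morphisme canonique
$$
H^\ast(J_{\bar a}^{\mathrm{ab}},\mathbb{Q}_\ell)\longrightarrow (Rf_\ast\mathbb{Q}_\ell)_{\bar a}
$$
dont je montrerais la non-trivialité en degrés $\leq 2\mathrm{d}_a^{\mathrm{ab}}$. Troisièmement, la polarisabilité de $V_\ell(J)$ fournit une forme alternée non dégénérée sur $H^1(J_{\bar a}^{\mathrm{ab}},\mathbb{Q}_\ell)$; les cup-produits itérés de classes de degré $1$ engendrent alors la classe fondamentale non nulle en degré $2\mathrm{d}_a^{\mathrm{ab}}$. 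Le transport de cette classe à $(Rf_\ast\mathbb{Q}_\ell)_{\bar a}$, puis sa propagation à un voisinage ouvert de $a$ dans $\overline{\{a\}}$, force la présence d'un constituant simple non nul au décalage $n_a^+\geq\mathrm{d}_a^{\mathrm{ab}}(J)$ dans la décomposition perverse.

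L'obstacle principal est manifestement le deuxième temps: la construction rigoureuse du morphisme d'action et la preuve de sa non-trivialité jusqu'au degré attendu. La difficulté vient de ce que l'action doit être contrôlée \emph{en famille} sur un voisinage étale de $\bar a$, et non fibre à fibre, pour que la structure polarisée globale de $V_\ell(J)$ se transmette aux constituants simples de $Rf_\ast\mathbb{Q}_\ell[d_N]$ à support dans $\overline{\{a\}}$. C'est précisément la percée technique de Ngô, dont la mise en œuvre exige un travail soigneux sur la cohomologie relative des torseurs sous $J^{\mathrm{ab}}$ (via, notamment, la classe de Chern du fibré de Poincaré sur $J\times J^\vee$ et un argument d'interpolation entre les strates où la famille est plus rigide).
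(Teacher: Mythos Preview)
Le papier ne démontre pas ce théorème: il l'énonce, l'attribue à Ngô \cite{Ngo}, et l'utilise ensuite comme boîte noire dans les preuves des théorèmes \ref{support} et \ref{ThrPr}. Il n'y a donc aucune démonstration dans le papier à laquelle comparer ta proposition.

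Sur le fond, ton esquisse suit bien la stratégie de Ngô dans \cite{Ngo}: la majoration est effectivement l'estimée de support générale (type Goresky--MacPherson) valable pour tout morphisme propre de source lisse, et pour la minoration tu identifies correctement les trois ingrédients --- dévissage de Chevalley, stabilisateurs affines, polarisabilité de $V_\ell(J)$. Un point à rectifier cependant: l'objet central chez Ngô n'est pas un morphisme $H^\ast(J_{\bar a}^{\mathrm{ab}},\mathbb{Q}_\ell)\to (Rf_\ast\mathbb{Q}_\ell)_{\bar a}$ dont on montrerait la non-trivialité, mais l'action par cap-produit de $\Lambda^\ast T_\ell(J)$ sur les faisceaux de cohomologie perverse, dont il établit la \emph{liberté} relativement au facteur abélien. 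C'est cette liberté, combinée à la polarisation, qui force l'amplitude à valoir au moins $2\mathrm{d}_a^{\mathrm{ab}}(J)$. Tu reconnais d'ailleurs toi-même que ce deuxième temps est «la percée technique de Ngô» et que sa mise en œuvre rigoureuse reste à faire; ta proposition est donc un plan honnête plutôt qu'une preuve, ce qui est cohérent avec le statut de ce résultat dans le papier.
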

\medskip

Pour tout $a\in\mathbb{A}_{n}^{\mathrm{ell}}$, la courbe spectrale $X_{a}$ est intègre et on peut donc introduire sa normalisation $\widetilde{X}_{a}$. On note
$$
\delta_{X_{a}}=\mathrm{long}(\mathcal{O}_{\widetilde{X}_{a}}/\mathcal{O}_{X_{a}}).
$$
On a 
$$
d_{f_{n}}-\mathrm{d}_{a}^{\mathrm{ab}}(J_{n})=\mathrm{d}_{a}^{\mathrm{aff}}(J_{n})=\delta_{X_{a}}
$$
puisque la jacobienne de $\widetilde{X}_{a}$ est la partie abélienne de la composante neutre $J_{n,a}$ du schéma de Picard de $X_{a}$.

\begin{theorem}[Diaz and Harris \cite{Diaz}]\label{severi}
Si $k$ est de caractéristique nulle, pour tout $a\in\mathbb{A}_{n}^{\mathrm{ell}}$, on a l'inégalité de Severi
$$
\mathrm{d}_{\mathbb{A}_{n}}-\mathrm{d}_{a}\geq\delta_{X_{a}}=\mathrm{d}_{a}^{\mathrm{aff}}(J_{n}),
$$
ou de manière équivalente
$$
{\mathrm{d}_{a}^{\mathrm{ab}}(J_{n})\geq d_{f_{n}}-d_{\mathbb{A}_{n}}+d_{a}}.
$$
\end{theorem}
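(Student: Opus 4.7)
Le plan est de réaliser la famille des courbes spectrales $\{X_a\}_{a\in\mathbb{A}_n}$ comme une famille linéaire de diviseurs de Cartier sur la surface lisse $\Sigma$, puis d'invoquer l'inégalité de type Severi de Diaz et Harris \cite{Diaz}. L'application $a \mapsto P_a(u)$ plonge $\mathbb{A}_n$ comme un sous-espace affine de $H^0(\Sigma, p^{\ast}\mathcal{O}_X(nD))$, dont les diviseurs correspondants sont précisément les courbes spectrales. En compactifiant $\Sigma$ dans la surface réglée $\overline{\Sigma} = \mathbb{P}(\mathcal{O}_X \oplus \mathcal{O}_X(-D))$, ce sous-espace affine découpe un sous-système linéaire d'un système linéaire complet sur $\overline{\Sigma}$, paramétrant les diviseurs de degré $n$ dans la direction verticale qui évitent la section à l'infini.

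L'inégalité de Severi-Diaz-Harris affirme que, pour un système linéaire $V$ suffisamment ample sur une surface projective lisse et pour toute courbe intègre $C_0 \in V$, la strate équigénérique passant par $C_0$ (le lieu des membres intègres de $V$ de genre géométrique au plus celui de la normalisée $\widetilde{C}_0$) est de codimension au moins $\delta(C_0) := \mathrm{long}(\mathcal{O}_{\widetilde{C}_0}/\mathcal{O}_{C_0})$ au point $[C_0]$. La preuve consiste à imposer les singularités de $C_0$ comme $\delta(C_0)$ conditions linéaires indépendantes sur le système linéaire ; l'étape où la caractéristique nulle intervient de manière essentielle est la lissité générique de la normalisation au-dessus d'une famille équisingulière, qui fait défaut en caractéristique $p$.

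Pour conclure, soit $a \in \mathbb{A}_n^{\mathrm{ell}}$, posons $Z = \overline{\{a\}}$ et soit $a'$ un point géométrique générique de $Z$. L'ouvert $\mathbb{A}_n^{\mathrm{ell}}$ étant dense et l'invariant $\delta$ étant semi-continu supérieurement, $X_{a'}$ est intègre avec $\delta_{X_{a'}} = \delta_{X_a}$. La sous-variété $Z$ est donc contenue dans la strate équigénérique passant par $X_{a'}$, et le théorème de Diaz-Harris appliqué à notre sous-système linéaire spectral fournit
$$
\mathrm{d}_{\mathbb{A}_n} - \mathrm{d}_a = \mathrm{codim}_{\mathbb{A}_n}(Z) \geq \delta_{X_a}.
$$
L'obstacle principal est de vérifier que le sous-système linéaire spectral, strictement plus petit que le système linéaire complet sur $\overline{\Sigma}$, est néanmoins suffisamment ample au sens requis par Diaz-Harris : il s'agit de montrer que sa restriction à tout sous-schéma de longueur $\delta_{X_a}$ supporté sur le lieu singulier de $X_a$ est surjective, ce qui se ramène, via la dualité de Serre et Riemann-Roch sur $X$, aux annulations $H^1(X, \mathcal{O}_X(iD)) = 0$ pour $i \geq 1$, conséquence directe de l'hypothèse $\deg D > 2g-2$.
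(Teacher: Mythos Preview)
Le papier ne donne aucune démonstration de ce théorème~\ref{severi} : il est simplement cité comme un résultat de Diaz et Harris, et utilisé tel quel comme ingrédient dans les preuves des théorèmes~\ref{support} et~\ref{ThrPr}. Il n'y a donc pas de \og{}preuve du papier\fg{} à laquelle comparer votre proposition.

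Cela dit, votre esquisse suit exactement la voie standard (celle de Ngô \cite[\S5]{Ngo} et, avant lui, de Diaz--Harris eux-mêmes adaptée au cas spectral) : plonger la famille des $X_a$ comme un sous-système linéaire de diviseurs sur la surface lisse $\overline{\Sigma}$, puis invoquer le contrôle de la codimension du lieu $\delta$-constant. Deux remarques mineures :
\begin{itemize}
\item Votre phrase \og{}$Z$ est contenue dans la strate équigénérique\fg{} est légèrement imprécise : par semi-continuité supérieure de $\delta$, on a seulement $Z\subset\{b\in\mathbb{A}_n^{\mathrm{ell}}\mid\delta_{X_b}\geq\delta_{X_a}\}$, les points spéciaux de $Z$ pouvant avoir un $\delta$ strictement plus grand. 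Cela suffit néanmoins, puisque c'est précisément la codimension de ce dernier lieu que Diaz--Harris minorent par $\delta_{X_a}$.
\item La réduction de la condition d'amplitude au vanishing $H^1(X,\mathcal{O}_X(iD))=0$ pour $i\geq 1$ est correcte dans son principe, mais mérite une ligne de plus : il faut observer que le sous-schéma de longueur $\delta_{X_a}$ concerné est supporté dans $\Sigma$ (hors de la section à l'infini), et que la restriction du système spectral à un tel sous-schéma coïncide avec celle du système linéaire complet sur $\overline{\Sigma}$, à la constante près fixée par le terme dominant $u^n$. C'est là que la projection $p$ et la décomposition $p_\ast\mathcal{O}_\Sigma=\bigoplus_{i\geq 0}\mathcal{O}_X(-iD)$ interviennent pour ramener la surjectivité voulue aux annulations de $H^1$ sur $X$.
\end{itemize}
Au total, votre proposition est correcte et constitue précisément l'argument que le papier sous-entend en renvoyant à \cite{Diaz}.
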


La preuve du théorème \ref{support} de Ngô consiste à combiner les inégalités de dimension formulées ci-dessus:

\begin{proof}
Soit $a\in\mathrm{Socle}(Rf_{n,\ast}^{\mathrm{ell}}\mathbb{Q}_{\ell}[d_{N_{n}^{e,\mathrm{ell}}}])$. En combinant le théorème \ref{technique} et l'inégalité de Severi (théorème \ref{severi}), on obtient
$$
d_{a}^{\mathrm{ab}}(J_{n}^{\mathrm{ell}})=n_{a}^{+}(Rf_{\ast}^{\mathrm{ell}}\mathbb{Q}_{\ell}[d_{N_{n}^{e,\mathrm{ell}}}])=d_{a}-(d_{\mathbb{A}_{n}}-d_{f_{n}}).
$$

La seconde de ces inégalités implique que $Rf_{n,\ast}^{\mathrm{ell},2\mathrm{d}_{n}}\mathbb{Q}_{\ell}$ admet comme facteur direct un faisceau $\ell$-adique non trivial de support $\overline{\{a\}}$. 

Mais si $a$ n'est pas le point générique de $\mathbb{A}_{n}^{\mathrm{ell}}$, c'est impossible. En effet les fibres de $f_{n}^{\mathrm{ell}}$ étant toutes irréductibles d'après Altmann et Kleiman \cite{Altmann}, on a 
$$
{Rf_{n,\ast}^{\mathrm{ell},2\mathrm{d}_{n}}\mathbb{Q}_{\ell}\cong\mathbb{Q}_{\ell}}.
$$
\end{proof}

\section{Hors du lieu elliptique}

Hors du lieu elliptique, $\mathcal{N}_{n}^{e}$ est plus compliqué:
\medskip

\begin{itemize}
\item même si l'on tue les automorphismes scalaires, $\mathcal{N}_{n}^{e}$ reste hautement non séparé: les groupes d'automorphismes sont affines mais ne sont pas finis en général;
\medskip

\item $\mathcal{N}_{n}^{e}$ est localement de type fini mais n'est plus quasi-compact.
\end{itemize}
\medskip

Pour obtenir un espace plus utilisable, on tronque $\mathcal{N}_{n}^{e}$ à l'aide la stabilité au sens de Mumford, Narasimhan, Seshadri et Hitchin.
\medskip

Tout fibré vectoriel $\mathcal{F}\not=(0)$ a une pente
$$
\mu(\mathcal{F})=\mathrm{deg}(\mathcal{F})/\mathrm{rank}(\mathcal{F}).
$$

\begin{definition}[Hitchin]
Un fibré de Hitchin $(\mathcal{E},\theta)$ est stable si pour tout sous-fibré $(0)\not=\mathcal{F}\subset\mathcal{E}$ tel que $\theta(\mathcal{F})\subset\mathcal{F}(D)$, on a
$$
\mu(\mathcal{F})<\mu(\mathcal{E}).
$$
\end{definition}

\begin{theorem}[Hitchin, Nitsure \cite{Nitsure}]
Les fibrés de Hitchin stables forment un sous-champ ouvert $\mathcal{N}_{n}^{e,\mathrm{st}}$ qui admet un schéma de modules grossier $N_{n}^{e,\mathrm{st}}$.

Le schéma $N_{n}^{e,\mathrm{st}}$ est lisse purement de dimension $d_{\mathbb{A}_{n}}+d_{f_{n}}=n^{2}d+1$ sur $k$. Le morphisme $f_{n}^{\mathrm{st}}:N_{n}^{e,\mathrm{st}}\rightarrow\mathbb{A}_{n}$ est projectif à fibres connexes.
\end{theorem}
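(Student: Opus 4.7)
L'ouverture du sous-champ stable résulte de la semi-continuité de la pente: on montre d'abord que la famille des sous-fibrés $\theta$-invariants susceptibles de déstabiliser une famille plate donnée de fibrés de Hitchin est bornée, puis on utilise que la stabilité s'exprime par une inégalité ouverte. Pour l'existence du schéma de modules grossier $N_n^{e,\mathrm{st}}$, on suivrait la construction GIT de Nitsure \cite{Nitsure}: on réalise les fibrés stables comme points GIT-stables d'un sous-schéma localement fermé d'un schéma de Quot convenable, sous l'action d'un groupe linéaire, et on vérifie la coïncidence de la stabilité GIT avec la stabilité de Hitchin pour une linéarisation adéquate.

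La lissité et le calcul de dimension relèveraient de la théorie des déformations classique des fibrés de Hitchin. L'espace tangent en $(\mathcal{E},\theta)$ est l'hypercohomologie $\mathbb{H}^1$ du complexe à deux termes
$$
\mathcal{E}nd(\mathcal{E})\xrightarrow{[-,\theta]}\mathcal{E}nd(\mathcal{E})\otimes\mathcal{O}_X(D)
$$
et l'obstruction vit dans $\mathbb{H}^2$ du même complexe. La dualité de Serre identifie $\mathbb{H}^2$ au dual d'un $\mathbb{H}^0$ du complexe tordu par $\omega_X\otimes\mathcal{O}_X(-D)$; sous l'hypothèse $\deg D>2g-2$, ce $\mathbb{H}^0$ est nul grâce à la stabilité, d'où la lissité. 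La caractéristique d'Euler du complexe, combinée à $\dim\mathbb{H}^0=1$ (les automorphismes d'un fibré de Hitchin stable sont scalaires), donne la dimension annoncée $n^2d+1$.

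La projectivité de $f_n^{\mathrm{st}}$ combinerait la quasi-projectivité fournie par la construction GIT avec la propreté. Cette dernière s'établit par le critère valuatif, via le théorème d'extension semi-stable de Langton, ou géométriquement grâce à la correspondance spectrale rappelée précédemment: la fibre $(f_n^{\mathrm{st}})^{-1}(a)$ s'identifie à un ouvert de l'espace propre des modules sans torsion de rang $1$ et de degré fixé sur $X_a$, construit par Altmann et Kleiman \cite{Altmann}. L'hypothèse que $e$ soit premier à $n$ garantit de surcroît la coïncidence de semi-stabilité et stabilité, de sorte que l'ouvert stable est en fait fermé dans l'ensemble semi-stable, donc déjà propre.

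Le point le plus délicat, et le principal obstacle à mes yeux, est la connexité des fibres. Sur le lieu elliptique elle découle d'Altmann-Kleiman puisque $X_a$ est intègre. Pour $a$ quelconque, $X_a$ est seulement connexe par Bertini, typiquement réductible voire non réduite; les composantes connexes de la jacobienne compactifiée sont alors indexées par les multi-degrés sur les composantes irréductibles de $X_a$. Il faut montrer que la correspondance spectrale, conjuguée à la stabilité de Hitchin et à la coprimalité de $e$ et $n$, sélectionne exactement une de ces composantes. Ceci demande soit un argument fin de spécialisation depuis le lieu elliptique exploitant la platitude de $f_n^{\mathrm{st}}$, soit l'invocation de résultats structurels sur les compactifications du schéma de Picard des courbes réductibles.
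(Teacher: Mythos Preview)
Le papier ne démontre pas ce théorème: il est énoncé comme un résultat connu, attribué à Hitchin et Nitsure, sans preuve. Il n'y a donc pas de démonstration propre au papier à laquelle comparer votre esquisse. Cela dit, votre plan reconstitue fidèlement la stratégie des références citées: ouverture de la stabilité par bornitude et semi-continuité, construction GIT à la Nitsure, lissité et dimension via l'annulation de $\mathbb{H}^{2}$ du complexe de déformation (qui utilise effectivement $\deg D>2g-2$ et la stabilité), propreté par un argument de type Langton et coïncidence stable/semi-stable grâce à $(n,e)=1$.

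Un mot sur ce que vous signalez comme le point délicat, la connexité des fibres: elle s'obtient formellement une fois le reste acquis, sans analyse fine des jacobiennes compactifiées sur les courbes non réduites. Puisque $N_{n}^{e,\mathrm{st}}$ est lisse (donc normal) et que $f_{n}^{\mathrm{st}}$ est propre sur $\mathbb{A}_{n}$ qui est normal, la factorisation de Stein donne $N_{n}^{e,\mathrm{st}}\to\mathbb{A}'\to\mathbb{A}_{n}$ avec fibres connexes sur la première flèche et $\mathbb{A}'\to\mathbb{A}_{n}$ fini. La fibre générique de $f_{n}^{\mathrm{st}}$ (au-dessus de $\mathbb{A}_{n}^{\mathrm{lisse}}$, où c'est un torseur sous une jacobienne) étant connexe, et $N_{n}^{e,\mathrm{st}}$ étant irréductible, $\mathbb{A}'\to\mathbb{A}_{n}$ est birationnel donc un isomorphisme par le théorème principal de Zariski. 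L'irréductibilité de $N_{n}^{e,\mathrm{st}}$, elle, se voit par exemple via l'action de $\mathbb{G}_{m}$ qui dilate $\theta$ et contracte sur le lieu $\theta=0$, lequel est le schéma de modules des fibrés stables de rang $n$ et degré $e$, connu pour être irréductible.
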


\section{Dimension des fibres de la fibration de Hitchin}

Nous avons vu que le morphisme $f_{n}:N_{n}^{e,\mathrm{lisse}}\rightarrow\mathbb{A}_{n}^{\mathrm{lisse}}$ est lisse purement de dimension relative $d_{f_{n}}$. Par suite le morphisme de champs $f_{n}:\mathcal{N}_{n}^{e}\rightarrow\mathbb{A}_{n}$ est purement de dimension relative $d_{f_{n}}-1$ au-dessus de l'ouvert $\mathbb{A}_{n}^{\mathrm{lisse}}$.
\medskip

Il en est de même au-dessus de l'ouvert elliptique et plus généralement au-dessus de l'ouvert $\mathbb{A}_{n}^{\mathrm{grss}}\subset\mathbb{A}_{n}$ où $X_{a}$ est réduite, c'est-à-dire où $\theta$ est génériquement régulier semi-simple. En effet, au-dessus de $\mathbb{A}_{n}^{\mathrm{grss}}$, l'ouvert des modules inversibles sur $X_{a}$ est dense dans le champ $f_{n}^{-1}(a)$ des modules sans torsion de rang générique $1$ sur $X_{a}$, d'après Esteves \cite{Esteves}.
\medskip

Nous démontrerons plus loin (cf. section \ref{preuveDimension}) que :

\begin{proposition}\label{dimension}
Le cône global nilpotent, i.e. la fibre en $0$ de la fibration de Hitchin $\mathcal{N}_{n}^{e}\rightarrow\mathbb{A}_{n}$, est de dimension au plus $d_{f_{n}}-1$.
\end{proposition}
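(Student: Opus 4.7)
Le plan est de stratifier le cône nilpotent $f_n^{-1}(0)$ selon le type de Jordan générique de $\theta$ et de borner la dimension de chaque strate par un calcul de Riemann-Roch. Pour $(\mathcal{E},\theta)$ nilpotent, les sous-faisceaux $\mathcal{F}_i=\ker(\theta^i:\mathcal{E}\to\mathcal{E}(iD))$ forment une filtration croissante canonique dont les gradués $\mathcal{G}_i=\mathcal{F}_i/\mathcal{F}_{i-1}$ sont sans torsion sur $X$, et $\theta$ induit des injections $\bar\theta_i:\mathcal{G}_i\hookrightarrow\mathcal{G}_{i-1}(D)$ pour $i\geq 2$. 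Les rangs $r_i=\mathrm{rg}(\mathcal{G}_i)$ définissent la partition conjuguée du type de Jordan générique $\lambda$ de $\theta$, et $f_n^{-1}(0)$ se partitionne en sous-champs localement fermés $\mathcal{N}_0^\lambda$ indexés par les partitions $\lambda$ de $n$.

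J'analyserais chaque strate $\mathcal{N}_0^\lambda$ comme fibrée sur le champ des données graduées $(\mathcal{G}_i,\bar\theta_i)$, la fibre étant formée des extensions itérées $\theta$-compatibles qui reconstruisent $(\mathcal{E},\theta)$. Les dimensions des trois couches se calculent par Riemann-Roch : le champ $\prod_i\mathrm{Bun}_{r_i}$ est de dimension $\sum_i r_i^2(g-1)$ ; pour $(\mathcal{G}_i)$ génériques, l'espace des $\bar\theta_i\in\mathrm{Hom}(\mathcal{G}_i,\mathcal{G}_{i-1}(D))$ a dimension gouvernée par les caractéristiques d'Euler $\chi(\mathcal{H}om(\mathcal{G}_i,\mathcal{G}_{i-1}(D)))$ ; et les extensions résiduelles entre gradués apportent des termes en $\mathrm{Ext}^1$. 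L'identité combinatoire $\sum_i r_i^2=\dim Z_{\mathfrak{gl}_n}(N_\lambda)$ (dimension du centralisateur d'une nilpotente de type $\lambda$), couplée à la dualité de Serre, permet alors, après assemblage, d'obtenir $\dim\mathcal{N}_0^\lambda\leq n(g-1)+\tfrac{n(n-1)d}{2}=d_{f_n}-1$, avec égalité escomptée sur la strate régulière $\lambda=(n)$. À l'extrême opposé, $\lambda=(1,\ldots,1)$ correspond à $\theta=0$ et s'identifie au champ $\mathrm{Bun}_n^e$ de dimension $n^2(g-1)$ ; l'inégalité $n^2(g-1)\leq d_{f_n}-1$ résulte alors directement de l'hypothèse $d>2g-2$.

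La difficulté principale est le contrôle de la condition de $\theta$-compatibilité sur les extensions itérées : l'espace naïf $\bigoplus_{i<j}\mathrm{Ext}^1(\mathcal{G}_j,\mathcal{G}_i)$ doit être restreint aux classes pour lesquelles le fibré $\mathcal{E}$ résultant porte effectivement un endomorphisme tordu nilpotent induisant les $\bar\theta_i$ prescrits. Je m'attends à ce que cette contrainte se lise à travers l'hypercohomologie du complexe à deux termes $[\mathcal{E}nd(\mathcal{E})\xrightarrow{[\theta,-]}\mathcal{E}nd(\mathcal{E})(D)]$ restreint à la filtration, et qu'une suite spectrale fournisse la codimension exacte nécessaire pour aligner la somme des contributions avec la borne $d_{f_n}-1$. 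La strate $\mathcal{N}_0^{(n)}$ joue un rôle particulier : elle correspond aux modules sans torsion sur $X_0$ dont le support schématique est l'épaississement entier, et s'identifie essentiellement à une famille de modules dans le champ de Picard compactifié de $X_0$, ce qui fournit une interprétation géométrique de l'égalité dans la borne.
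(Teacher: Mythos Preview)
Your stratification by the Jordan type and the target formula
\[
\dim(\hbox{strate }\lambda)=\Bigl(\sum_{i}r_{i}^{2}\Bigr)(g-1)+\Bigl(\sum_{i<j}r_{i}r_{j}\Bigr)d
\]
are correct and coincide with what the paper eventually obtains; the comparison with $d_{f_{n}}-1$ is then the elementary identity you indicate, and your two extreme cases are computed correctly. So the skeleton of the argument is sound.

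The route, however, is genuinely different from the paper's. The paper does \emph{not} carry out a direct deformation-theoretic count. It first reduces to $k=\overline{\mathbb{F}}_{q}$ and expresses the groupoid cardinality $|\mathcal{N}_{\underline{n}}^{\underline{e}}(\mathbb{F}_{q})|$ as an adelic orbital integral over $P_{\gamma}(F)\backslash P(\mathbb{A})^{\underline{e}}$ for the standard parabolic $P$ attached to $\underline{n}$ and a fixed nilpotent $\gamma\in\mathfrak{n}$. The unipotent integral is handled by Ranga~Rao's isomorphism $N_{\gamma}\backslash N\simeq\mathfrak{n}'=[\mathfrak{n},\mathfrak{n}]$, which linearises exactly the ``$\theta$-compatible iterated extensions'' that you flag as the main difficulty. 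What remains is the integral over $M_{\gamma}(F)\backslash M(\mathbb{A})^{\underline{e}}$, which counts chains $\mathcal{F}^{s}\to\cdots\to\mathcal{F}^{1}$; for \emph{that} dimension the paper simply quotes Garc{\'\i}a-Prada--Heinloth--Schmitt. Weight considerations then convert the point count into a dimension.

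So the gap in your proposal is precisely the step you yourself call the ``difficult\'e principale'': controlling the $\theta$-compatible extensions via the hypercohomology of $[\mathcal{E}nd(\mathcal{E})\xrightarrow{[\theta,-]}\mathcal{E}nd(\mathcal{E})(D)]$ restricted to the filtration. This can be done --- it is essentially the content of the Garc{\'\i}a-Prada--Heinloth--Schmitt computation for chains, together with the observation that a nilpotent Higgs bundle with the kernel (or image) filtration is the same datum as a chain --- but you have not done it, and ``je m'attends \`a ce que'' is not a proof. The paper sidesteps this algebraic bookkeeping entirely by the Ranga~Rao trick, at the cost of passing through finite fields; your approach would stay over an arbitrary $k$ and be more geometric, but requires you to actually run the spectral sequence (or, more simply, to invoke the equivalence with chains and cite the chain-dimension formula directly).
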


\begin{corollary}
Le morphisme $f_{n}^{\mathrm{st}}:N_{n}^{e,\mathrm{st}}\rightarrow\mathbb{A}_{n}$ est purement de dimension relative $d_{f_{n}}$.
\end{corollary}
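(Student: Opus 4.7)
Le plan est de déduire le corollaire de la proposition \ref{dimension} en exploitant l'action du tore $\mathbb{G}_m$ sur la fibration de Hitchin par homothéties sur $\theta$. Cette action, définie sur $\mathcal{N}_n^e$ par $t \cdot (\mathcal{E}, \theta) = (\mathcal{E}, t\theta)$, est compatible avec l'action de poids $(1, 2, \ldots, n)$ sur $\mathbb{A}_n = \bigoplus_{i=1}^{n} H^0(X, \mathcal{O}_X(iD))$, laquelle contracte toute la base vers l'origine.

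Par équivariance et semi-continuité supérieure de la dimension des fibres, on obtient d'abord, pour tout $a \in \mathbb{A}_n$,
$$
\dim f_n^{-1}(a) \leq \dim f_n^{-1}(0) \leq d_{f_n} - 1,
$$
la dernière inégalité n'étant autre que la proposition \ref{dimension}. L'ouvert $\mathcal{N}_n^{e,\mathrm{st}}$ est stable sous $\mathbb{G}_m$ (les sous-fibrés $\theta$-invariants coïncident avec les sous-fibrés $(t\theta)$-invariants pour $t \neq 0$) et constitue un $\mathbb{G}_m$-gerbe sur son espace grossier $N_n^{e,\mathrm{st}}$, le groupe d'automorphismes d'un fibré de Hitchin stable se réduisant aux scalaires. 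En passant au champ quotient, on ajoute $1$ à la dimension des fibres, de sorte que la majoration précédente se transfère en $\dim (f_n^{\mathrm{st}})^{-1}(a) \leq d_{f_n}$ pour tout $a \in \mathbb{A}_n$.

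Pour la minoration, on remarque que $f_n^{\mathrm{st}}$ est surjectif : son image, fermée par projectivité, contient l'ouvert dense $\mathbb{A}_n^{\mathrm{lisse}}$ au-dessus duquel le morphisme est un torseur sous le schéma abélien $J_n^{\mathrm{lisse}}$. La lissité de $N_n^{e,\mathrm{st}}$, purement de dimension $d_{\mathbb{A}_n} + d_{f_n}$, combinée au théorème classique sur la dimension des fibres d'un morphisme dominant entre variétés irréductibles, entraîne alors que toute composante irréductible de toute fibre de $f_n^{\mathrm{st}}$ est de dimension au moins $d_{f_n}$, d'où l'égalité annoncée.

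Le seul véritable obstacle dans cette stratégie est la proposition \ref{dimension} elle-même, dont la preuve, de nature géométrique plus délicate, est reportée à la section \ref{preuveDimension} ; les étapes précédentes sont essentiellement formelles, ne reposant que sur l'équivariance, la semi-continuité de la dimension des fibres et la description du champ stable comme $\mathbb{G}_m$-gerbe sur son espace de modules.
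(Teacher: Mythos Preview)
Your argument is correct and follows essentially the same approach as the paper's two-line proof: bound the fibre over $0$ via Proposition~\ref{dimension}, then propagate to all fibres by upper semicontinuity of fibre dimension. You simply make explicit two ingredients the paper leaves tacit --- the contracting $\mathbb{G}_m$-action (needed so that every point degenerates to $0$ and semicontinuity applies in the right direction) and the lower bound on fibre dimension coming from irreducibility of $N_n^{e,\mathrm{st}}$ and its pure dimension $d_{\mathbb{A}_n}+d_{f_n}$.
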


\begin{proof}
D'après la proposition, $(f_{n}^{\mathrm{st}})^{-1}(0)$ est de dimension au plus $d_{f_{n}}$, or la dimension des fibres de $f_{n}^{\mathrm{st}}$ ne peut pas chuter par spécialisation.
\end{proof}

\section{Le théorème principal}

Il résulte du théorème de décomposition que $Rf_{n,\ast}^{\mathrm{st}}\mathbb{Q}_{\ell}[d_{\mathbb{A}_{n}}+d_{f_{n}}]$ est semi-simple.
\medskip

Le théorème suivant prolonge le théorème \ref{support} de Ngô, et aussi notre extension du théorème \ref{support} à l'ouvert génériquement régulier semi-simple \cite{Chaudouard}.

\begin{theorem}\label{ThrPr}
Si $k$ est de caractéristique nulle, le socle de $Rf_{n,\ast}^{\mathrm{st}}\mathbb{Q}_{\ell}$ est réduit au point générique de $\mathbb{A}_{n}$. En d'autres termes on a un isomorphisme
$$
{Rf_{n,\ast}^{\mathrm{st}}\mathbb{Q}_{\ell}\cong\bigoplus_{i}\mathrm{IC}_{\mathbb{A}_{n},R^{i}f_{n,\ast}^{\mathrm{lisse}}\mathbb{Q}_{\ell}}[-i]}.
$$
\end{theorem}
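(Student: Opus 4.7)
Le plan est de reprendre la preuve du théorème \ref{support} donnée juste au-dessus, avec la fibration de Hitchin stable $f_n^{\mathrm{st}}:N_n^{e,\mathrm{st}}\rightarrow\mathbb{A}_n$ jouant le rôle de $f_n^{\mathrm{ell}}$. Grâce aux énoncés précédents, $N_n^{e,\mathrm{st}}$ est lisse purement de dimension $d_{\mathbb{A}_n}+d_{f_n}$, et $f_n^{\mathrm{st}}$ est projectif purement de dimension relative $d_{f_n}$, de sorte que le théorème de décomposition \ref{decomposition} s'applique à
$$
K := Rf_{n,\ast}^{\mathrm{st}}\mathbb{Q}_{\ell}[d_{\mathbb{A}_n}+d_{f_n}],
$$
et le but est de démontrer que $\mathrm{Socle}(K)$ est réduit au point générique de $\mathbb{A}_n$. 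L'action de $\mathcal{P}_n$ se restreint en une action de $J_n$ sur $N_n^{e,\mathrm{st}}$ compatible avec $f_n^{\mathrm{st}}$.

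La première étape est d'appliquer l'inégalité de Ngô (théorème \ref{technique}) à cette action. Les hypothèses requises --- affinité des stabilisateurs des points fermés de $N_n^{e,\mathrm{st}}$ et polarisabilité de $V_{\ell}(J_n)$ --- se vérifient comme dans le cas elliptique : les groupes d'automorphismes des fibrés de Hitchin stables sont réduits aux scalaires, et les stabilisateurs sous $J_n$ en résultent comme extensions de groupes finis par des groupes affines, donc affines ; la polarisation sur $J_n^{\mathrm{lisse}}$ se transporte à $J_n$ par densité du lieu lisse. La conclusion est que pour tout $a\in\mathrm{Socle}(K)$ on a
$$
d_{f_n}-d_{\mathbb{A}_n}+d_a \geq n_a^+(K) \geq \mathrm{d}_a^{\mathrm{ab}}(J_n).
$$

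La deuxième étape, qui est selon moi le point difficile, est d'établir l'inégalité duale de type Severi
$$
\mathrm{d}_a^{\mathrm{ab}}(J_n) \geq d_{f_n}-d_{\mathbb{A}_n}+d_a
$$
pour tout $a \in \mathbb{A}_n$, généralisant le théorème \ref{severi} de Diaz et Harris du lieu elliptique (où $X_a$ est intègre) à $\mathbb{A}_n$ tout entier. Hors du lieu elliptique la courbe spectrale $X_a$ peut être à la fois réductible et non réduite, et $\mathrm{d}_a^{\mathrm{aff}}(J_n)=d_{f_n}-\mathrm{d}_a^{\mathrm{ab}}(J_n)$ n'est plus un simple $\delta$-invariant ; il faut le décomposer en contributions provenant du recollement des composantes, des singularités des composantes réduites, et des multiplicités génériques, puis borner chacune par la codimension dans $\mathbb{A}_n$ de la strate équisingulière correspondante, à l'aide d'un comptage de déformations à la Diaz-Harris. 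La caractéristique nulle intervient exactement comme chez eux.

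La troisième étape consiste à combiner les deux inégalités, ce qui force
$$
n_a^+(K)=\mathrm{d}_a^{\mathrm{ab}}(J_n)=d_a-(d_{\mathbb{A}_n}-d_{f_n})
$$
pour tout $a$ dans le socle ; il en résulte que $R^{2d_{f_n}}f_{n,\ast}^{\mathrm{st}}\mathbb{Q}_{\ell}$ admet un facteur direct non nul dont le support est exactement $\overline{\{a\}}$. Pour conclure que $a$ est le point générique de $\mathbb{A}_n$, on suit le schéma de la preuve du théorème \ref{support} : sur $\mathbb{A}_n^{\mathrm{lisse}}$ le faisceau $R^{2d_{f_n}}f_{n,\ast}^{\mathrm{st}}\mathbb{Q}_{\ell}$ est constant égal à $\mathbb{Q}_{\ell}$ (fibres lisses connexes, torseurs sous une variété abélienne), et pour exclure un facteur direct à support dans un fermé propre de $\mathbb{A}_n$ il suffit de savoir que toute fibre de $f_n^{\mathrm{st}}$ est géométriquement irréductible de dimension pure $d_{f_n}$. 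Cette irréductibilité s'établit à partir de la description de Schaub de $f_n^{-1}(a)$ et du théorème de densité d'Esteves pour les modules sans torsion de rang $1$ sur $X_a$, ce qui achève la preuve.
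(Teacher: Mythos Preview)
Your overall architecture (Ngô's amplitude inequality + a Severi-type bound + control of the top cohomology sheaf) is the right one, but the paper executes the two middle steps very differently, and your execution has a gap.

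\textbf{The Severi step.} You propose to extend the Diaz--Harris inequality directly to arbitrary (possibly non-reduced, reducible) spectral curves by a deformation-theoretic analysis of each type of contribution to $\mathrm{d}_a^{\mathrm{aff}}(J_n)$. The paper bypasses this entirely via a factorisation trick: write $P_a(u)=\prod_i P_{a_i}(u)^{m_i}$ with the $a_i\in\mathbb{A}_{n_i}^{\mathrm{ell}}$ pairwise distinct, so that $X_{a_i}$ are the irreducible components of $X_a$. One then has the additivity relations
\[
\mathrm{d}_a^{\mathrm{ab}}(J_n)=\sum_i \mathrm{d}_{a_i}^{\mathrm{ab}}(J_{n_i}),\qquad d_a=\sum_i d_{a_i},
\]
and the \emph{known} elliptic Severi inequality (théorème~\ref{severi}) applies to each $a_i$ separately. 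Combining with Ngô's inequality for $a$ and the formula $d_{f_m}-d_{\mathbb{A}_m}=m(2g-2-d)+1$ yields
\[
1-s\ \geq\ (n-n_1-\cdots-n_s)(d-2g+2).
\]
Since $d>2g-2$, the right-hand side is $\geq 0$ while the left is $\leq 0$, forcing $s=1$ and $n_1=n$; that is, $a\in\mathbb{A}_n^{\mathrm{ell}}$. This reduces you immediately to the elliptic case already treated by Ngô, with no new Severi-type result needed.

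\textbf{The top-cohomology step.} Your conclusion relies on the geometric irreducibility of \emph{every} fibre of $f_n^{\mathrm{st}}$, which you claim follows from Schaub's description plus Esteves's density theorem. But Esteves's result concerns reduced curves; when some $m_i>1$ the spectral curve $X_a$ is non-reduced and the density of line bundles in the space of rank-one torsion-free sheaves is not available from that reference. This is a genuine gap. The paper's argument never needs irreducibility outside $\mathbb{A}_n^{\mathrm{ell}}$, precisely because the numerical inequality above already confines the socle to the elliptic locus; irreducibility there is Altman--Kleiman, as in the proof of théorème~\ref{support}.

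In short: the factorisation by irreducible components of $X_a$ is the missing idea. It turns your ``point difficile'' into a one-line reduction to the elliptic Severi inequality, and it makes the fibre-irreducibility issue over the non-reduced locus disappear.
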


\begin{proof}
Soit $\Lambda_{n}$ l'ensemble des couples $(\underline{n},\underline{m})$ de suites $\underline{n}=(n_{1}\geq\cdots\geq n_{s})$ et $\underline{m}=(m_{1},\ldots,m_{s})$ d'entiers strictement positifs telles que
$$
n=n_{1}m_{1}+\cdots+n_{s}m_{s}.
$$

Pour chaque $\lambda=(\underline{n},\underline{m})$ on a un morphisme fini
$$
\iota_{\lambda}:\mathbb{A}_{n_{1}}\times_{k}\cdots\times_{k}\mathbb{A}_{n_{s}}\rightarrow\mathbb{A}_{n}
$$
défini par $P_{\iota_{\lambda}(a_{1},\ldots,a_{s})}(u)=P_{a_{1}}^{m_{1}}(u)\cdots P_{a_{s}}^{m_{s}}(u)$.
\medskip

Pour chaque $a\in\mathbb{A}_{n}$, il existe un unique $\lambda=\lambda(a)\in\Lambda_{n}$ tel que $a$ puisse s'écrire $a=\iota_{\lambda}(a_{1},\ldots,a_{s})$ avec des $a_{i}\in\mathbb{A}_{n_{i}}^{\mathrm{ell}}$ et $P_{a_{i}}(u)\not=P_{a_{j}}(u)$, $\forall i\not=j$. Les $X_{n_{i},a_{i}}$ sont les composantes irréductibles de $X_{a}$ et, pour chaque $i$, $m_{i}$ est la multiplicité de $X_{n_{i},a_{i}}$ dans le diviseur de Cartier $X_{a}$.
\medskip

Les ensembles $\mathbb{A}_{n,\lambda}=\{a\in\mathbb{A}_{n}\mid\lambda(a)=\lambda\}$ forment une stratification de $\mathbb{A}_{n}$ par des parties localement fermées, la strate ouverte étant $\mathbb{A}_{n,((n),(1))}=\mathbb{A}_{n}^{\mathrm{ell}}$ et la strate fermée étant la strate \og{nilpotente\fg} $\mathbb{A}_{n,((1),(n))}$.
\medskip

Pour $\lambda=(\underline{n},\underline{m})\in\Lambda_{n}$ et $a=\iota_{\lambda}(a_{1},\ldots,a_{s})\in\mathbb{A}_{n,\lambda}$ avec des $a_{i}\in\mathbb{A}_{n_{i}}^{\mathrm{ell}}$ comme ci-dessus, on pose $n'=n_{1}+\cdots+n_{s}$ et on définit $a'\in\mathbb{A}_{n'}^{\mathrm{grss}}$ par $P_{a'}(u)=P_{a_{1}}(u)\cdots P_{a_{s}}(u)$, de sorte que $X_{a'}$ n'est autre que la courbe réduite $(X_{a})_{\mathrm{red}}$. 
\medskip

On a un homomorphisme de restriction de $X_{a}$ à $X_{a'}$
$$
J_{n,a}\rightarrow J_{n',a'}
$$
qui est surjectif, et dont le noyau est affine et est une extension successive de copies du groupe additif. De plus on a un homomorphisme 
$$
J_{n',a'}\rightarrow J_{n_{1},a_{1}}\times_{k}\cdots\times_{k}J_{n_{s},a_{s}}
$$
qui est lui aussi surjectif à noyau affine. Par suite
$$
{\mathrm{d}_{a}^{\mathrm{ab}}(J_{n})=\mathrm{d}_{a_{1}}^{\mathrm{ab}}(J_{n_{1}})+\cdots+\mathrm{d}_{a_{s}}^{\mathrm{ab}}(J_{n_{s}})}.
$$

Si $a=\iota_{\lambda}(a_{1},\ldots,a_{s})$ est de plus dans $\mathrm{Socle}(Rf_{n,\ast}^{\mathrm{st}}\mathbb{Q}_{\ell})$, on a d'une part
$$
d_{f_{n}}-d_{\mathbb{A}_{n}}+d_{a}\geq\mathrm{d}_{a}^{\mathrm{ab}}(J_{n})
$$
d'après le théorème \ref{technique}, et d'autre part, pour chaque $i$, on a
$$
\mathrm{d}_{a_{i}}^{\mathrm{ab}}(J_{n_{i}})\geq d_{f_{n_{i}}}-d_{\mathbb{A}_{n_{i}}}+d_{a_{i}}
$$
d'après le théorème \ref{severi} appliqué à $a_{i}\in\mathbb{A}_{n_{i}}^{\mathrm{ell}}$. Or
$$
\mathrm{d}_{a}=\mathrm{d}_{a_{1}}+\cdots+\mathrm{d}_{a_{s}},
$$
$$
d_{f_{n}}-d_{\mathbb{A}_{n}}=n(2g-2-d)+1
$$
et
$$
\mathrm{d}_{a}^{\mathrm{ab}}(J_{n})=\mathrm{d}_{a_{1}}^{\mathrm{ab}}(J_{n_{1}})+\cdots+\mathrm{d}_{a_{s}}^{\mathrm{ab}}(J_{n_{s}}).
$$
Par suite on obtient l'inégalité
$$
{1-s\geq (n-n_{1}-\cdots-n_{s})(d-2g+2)}
$$
qui n'est possible que si $s=1$ et $n_{1}=n$, c'est-à-dire $\mathbb{A}_{n,\lambda}=\mathbb{A}_{n}^{\mathrm{ell}}$ puisque l'on a supposé $d>2g-2$.
\end{proof}

\section{Dimension du cône global nilpotent}\label{preuveDimension}

Dans cette section, nous allons démontrer la proposition \ref{dimension}.

Comme $X$ et $D$ sont définis sur une extension de type fini sur le corps premier contenu dans $k$ ($\mathbb{Q}$ ou $\mathbb{F}_{p}$), il suffit de le faire quand $k$ est la clôture algébrique d'un corps fini $\mathbb{F}_{q}$ et quand $X$ et $D$ sont définis sur ce corps fini.

Pour tout objet $(\mathcal{E},\theta)$ de $f_{n}^{-1}(0)$, il existe une entier $1\leq s\leq n$ tel que $\theta^{s}=0$ et $\theta^{s-1}\not=0$ et on a le drapeau des images des itérées de $\theta$,
$$
\mathcal{E}_{\bullet}=(\mathcal{E}_{0}=(0)\subsetneq\mathcal{E}_{1}=\mathrm{Im}(\theta^{s-1})((1-s)D)\subsetneq\cdots\subsetneq\mathcal{E}_{s-1}=\mathrm{Im}(\theta)(-D)\subsetneq\mathcal{E}_{s}=\mathcal{E}).
$$
On peut donc associer à $(\mathcal{E},\theta)$ la partition
$$
\underline{n}(\mathcal{E},\theta)=(n_{1}+n_{2}+\cdots+n_{s}=n)
$$
définie par $n_{i}=\mathrm{rang}(\mathcal{E}_{i}/\mathcal{E}_{i-1})$ et la suite d'entiers
$$
\underline{e}(\mathcal{E},\theta)=(e_{1},\ldots,e_{s})\hbox{ avec }e_{1}+e_{2}+\cdots+e_{s}=e
$$
définie par $e_{i}=\mathrm{deg}(\mathcal{E}_{i}/\mathcal{E}_{i-1})$.
\medskip

Pour $\underline{n}$ et $\underline{e}$ fixées, les conditions $\underline{n}(\mathcal{E},\theta)=\underline{n}$ et $\underline{e}(\mathcal{E},\theta)=\underline{e}$ définissent une partie localement fermée $\mathcal{N}_{\underline{n}}^{\underline{e}}\subset f_{n}^{-1}(0)$. On obtient ainsi une stratification de $f_{n}^{-1}(0)$ et il suffit de démontrer que pour chaque strate $\mathcal{N}_{\underline{n}}^{\underline{e}}$, on a
$$
\mathrm{dim}(\mathcal{N}_{\underline{n}}^{\underline{e}})\leq d_{f_{n}}-1.
$$

À chaque $(\mathcal{E},\theta)\in\mathcal{N}_{\underline{n}}^{\underline{e}}$, on peut associer la chaîne
$$
\mathcal{F}^{\bullet}=(\mathcal{F}^{s}\rightarrow\mathcal{F}^{s-1}\rightarrow\cdots\mathcal{F}^{1})
$$
où
$$
\mathcal{F}^{i}=\mathrm{Im}(\theta^{s-i})/\mathrm{Im}(\theta^{s-i+1})(-D)=(\mathcal{E}_{i}/\mathcal{E}_{i-1})((s-i)D)
$$
est de rang $n_{i}$ et de degré
$$
f_{i}=e_{i}+n_{i}(s-i)d,
$$
et où les flèches induites par $\theta$ sont toutes génériquement surjectives. Par suite, pour que la strate $\mathcal{N}_{\underline{n}}^{\underline{e}}$ soit non vide, il faut que 
$$
n_{s}\geq n_{s-1}\geq\cdots\geq n_{1}
$$
et qu'en cas d'égalité $n_{i+1}=n_{i}$, on ait $e_{i+1}-d\leq e_{i}$ puisqu'alors $\mathcal{F}_{i+1}\rightarrow\mathcal{F}_{i}$ est une injection entre $\mathcal{O}_{X}$-modules localement libres de même rang. On suppose dans la suite que ces deux dernières conditions sont vérifiées.
\medskip

Soit $P=MN\subset\mathrm{GL}(n)$ le parabolique standard de sous-groupe de Levi $M=\mathrm{GL}(n_{1})\times \mathrm{GL}(n_{2})\times\cdots\times\mathrm{GL}(n_{s})$, et soit $\gamma$ l'élément de l'algèbre de Lie $\mathfrak{n}$ de $N$ donné par la matrice par blocs $(\gamma_{i,j})_{i,j=1,\ldots s}$ avec tous les $\gamma_{i,j}$ nuls sauf les $\gamma_{i,i+1}\in\mathrm{Mat}(n_{i}\times n_{i+1})$ qui sont égaux à $(I,0)$ où I est la matrice identité de $\mathrm{GL}(n_{i})$ et $0$ est la matrice nulle de $\mathrm{Mat}(n_{i}\times (n_{i+1}-n_{i}))$. Au point générique de la courbe $X$, on peut identifier le drapeau $\mathcal{E}_{\bullet}$ au drapeau
$$
(0)\subsetneq F^{n_{1}}\subsetneq F^{n_{1}+n_{2}}\subsetneq\cdots\subsetneq F^{n_{1}+\cdots n_{s-1}}\subsetneq F^{n}
$$
de sorte que la matrice de $\theta$ soit précisément $\gamma$.

Si on note:
\begin{itemize}
\item $P_{\gamma}$ le centralisateur de $\gamma$ dans $P$,
\item $P(\mathbb{A})^{\underline{e}}=N(\mathbb{A})M(\mathbb{A})^{\underline{e}}$
avec
$$
M(\mathbb{A})^{\underline{e}}=\{m=(g_{1},\ldots,g_{s})\in \prod_{i=1}^{s}\mathrm{GL}(n_{i},\mathbb{A})\mid \mathrm{deg}(\mathrm{det}(g_{i}))=-e_{i}\},
$$
\item $\mathbf{1}_{D}$ la fonction caractéristique de $\varpi^{-D}\mathfrak{p}(\mathcal{O})$ dans $\mathfrak{p}(\mathbb{A})$,
\item $\mathrm{d}p$ la mesure de Haar à gauche qui donne le volume $1$ à $P(\mathcal{O})$,
\end{itemize}
le  cardinal champêtre de $\mathcal{N}_{\underline{n}}^{\underline{e}}(\mathbb{F}_{q})$ est égal à
$$
|\mathcal{N}_{\underline{n}}^{\underline{e}}(\mathbb{F}_{q})|=\int_{P_{\gamma}(F)\backslash P(\mathbb{A})^{\underline{e}}}\mathbf{1}_{D}(p^{-1}\gamma p)\mathrm{d}p.
$$

On peut calculer cette intégrale en deux temps
$$
|\mathcal{N}_{\underline{n}}^{\underline{e}}(\mathbb{F}_{q})|=\int_{M_{\gamma}(F)\backslash M(\mathbb{A})^{\underline{e}}}\delta_{P}^{-1}(m)\int_{N_{\gamma}(F)\backslash N(\mathbb{A})}\mathbf{1}_{D}(m^{-1}n^{-1}\gamma nm)\mathrm{d}n\mathrm{d}m
$$
où $M_{\gamma}=M\cap P_{\gamma}$, $N_{\gamma}=N\cap P_{\gamma}$, $\mathrm{d}m$ et $\mathrm{d}n$ sont les mesures de Haar sur $M(\mathbb{A})$ et $N(\mathbb{A})$ qui donnent le volume $1$ à $M(\mathcal{O})$ et $N(\mathcal{O})$, et $\delta_{P}$ est le caractère modulaire dont la valeur sur $M(\mathbb{A})^{\underline{e}}$ est constante et égale à
$$
q^{\sum_{i<j}(n_{j}e_{i}-n_{i}e_{j})}.
$$

Maintenant pour $m$ fixé, on a
\begin{equation*}
\begin{split}
\int_{N_{\gamma}(F)\backslash N(\mathbb{A})}\mathbf{1}_{D}(m^{-1}n^{-1}\gamma nm)\mathrm{d}n = &\mathrm{vol}(N_{\gamma}(F)\backslash N_{\gamma}(\mathbb{A}),\mathrm{d}n_{\gamma})\\
 & \ \ \ \ \ \ \int_{N_{\gamma}(\mathbb{A})\backslash N(\mathbb{A})}\mathbf{1}_{D}(m^{-1}n^{-1}\gamma nm)\frac{\mathrm{d}n}{\mathrm{d}n_{\gamma}} 
\end{split}
\end{equation*}
où $\mathrm{d}n_{\gamma}$ est la mesure de Haar qui donne le volume $1$ à $N_{\gamma}(\mathcal{O})$,
et d'après Ranga~Rao, la flèche
$$
N_{\gamma}\backslash N\rightarrow \mathfrak{n}',~n\mapsto n^{-1}\gamma n-\gamma
$$
où $\mathfrak{n}'=[\mathfrak{n},\mathfrak{n}]$, est un isomorphisme algébrique qui envoie la mesure de Haar adélique $\frac{\mathrm{d}n}{\mathrm{d}n_{\gamma}}$ sur la mesure de Haar $\mathrm{d}\nu'$ sur $\mathfrak{n}'(\mathbb{A})$ qui donne le volume $1$ à $\mathfrak{n}'(\mathcal{O})$. En particulier, $N_{\gamma}$ étant un groupe unipotent de dimension $\sum_{i=1}^{s-1}n_{i}n_{i+1}$, on a
$$
\mathrm{vol}(N_{\gamma}(F)\backslash N_{\gamma}(\mathbb{A}),\mathrm{d}n_{\gamma})=q^{\sum_{i=1}^{s-1}n_{i}n_{i+1}(g-1)}.
$$

Comme
$$
\mathbf{1}_{D}(m^{-1}\gamma m+m^{-1}\nu'm)=\mathbf{1}_{D}(m^{-1}\gamma m)\mathbf{1}_{D}(m^{-1}\nu'm)
$$
et que
$$
\int_{\mathfrak{n'}(\mathbb{A})}\mathbf{1}_{D}(m^{-1}\nu'm)\mathrm{d}\nu'=q^{\sum_{i<j-1}(n_{j}e_{i}-n_{i}e_{j}+n_{i}n_{j}d)}
$$
ne dépend que de $\underline{e}$ et non de $m$, on obtient que 
$$
|\mathcal{N}_{\underline{n}}^{\underline{e}}(\mathbb{F}_{q})|=q^{\Delta}\int_{M_{\gamma}(F)\backslash M(\mathbb{A})^{\underline{e}}}\mathbf{1}_{D}(m^{-1}\gamma m)\mathrm{d}m
$$
où
\begin{equation*}
\begin{split}
\Delta=&-\sum_{i<j}(n_{j}e_{i}-n_{i}e_{j})+\sum_{i<j-1}(n_{j}e_{i}-n_{i}e_{j}+n_{i}n_{j}d)+\sum_{i=1}^{s-1}n_{i}n_{i+1}(g-1)\\
=&-\sum_{i=0}^{s-1}(n_{i+1}e_{i}-n_{i}e_{i+1})+\sum_{i<j-1}n_{i}n_{j}d+\sum_{i=1}^{s-1}n_{i}n_{i+1}(g-1).
\end{split}
\end{equation*}

La dernière intégrale
$$
\int_{M_{\gamma}(F)\backslash M(\mathbb{A})^{\underline{e}}}\mathbf{1}_{D}(m^{-1}\gamma m)\mathrm{d}m
$$
n'est autre que le cardinal champêtre de la catégorie des chaînes $\mathcal{F}^{\bullet}$. Des considérations de poids dans la cohomologie $\ell$-adique montrent alors que
$$
\mathrm{dim}(\mathcal{N}_{\underline{n}}^{\underline{e}})=\mathrm{dim}(\mathcal{C})+\Delta
$$
où $\mathcal{C}$ est le champ des chaînes $\mathcal{F}^{\bullet}$.
\medskip

D'après Garcia-Prada, Heinloth et Schmitt \cite{Garcia-Prada},
$$
\mathrm{dim}(\mathcal{C})=\sum_{i=0}^{s-1}n_{i+1}(n_{i+1}-n_{i})(g-1)+\sum_{i=0}^{s-1}(n_{i+1}f_{i}-n_{i}f_{i+1})
$$
avec la convention $n_{0}=0$ et $f_{0}=0$, soit encore
$$
\sum_{i=0}^{s-1}n_{i+1}(n_{i+1}-n_{i})(g-1)+\sum_{i=0}^{s-1}(n_{i+1}e_{i}-n_{i}e_{i+1})+\sum_{i=1}^{s-1}n_{i}n_{i+1}d
$$
avec la convention $e_{0}=0$. Par suite
$$
\mathrm{dim}(\mathcal{N}_{\underline{n}}^{\underline{e}})=\left(\sum_{i=1}^{s}n_{i}^{2}\right)(g-1)+\left(\sum_{i<j}n_{i}n_{j}\right)d.
$$

Il ne reste plus qu'à comparer cette dimension de $\mathcal{N}_{\underline{n}}^{\underline{e}}$ à 
$$
d_{f_{n}}-1=n(g-1)+\frac{n(n-1)}{2}d.
$$
Si on écrit $d=2g-2+d'$ avec $d'>0$, on a 
$$
d_{f_{n}}-1-\mathrm{dim}(\mathcal{N}_{\underline{n}}^{\underline{e}})=d'\left(\frac{n(n-1)}{2}-\sum_{i<j}n_{i}n_{j}\right)=\frac{d'}{2}\sum_{i=1}^{s}n_{i}(n_{i}-1)
$$
et cette expression est donc $\geq 0$, avec égalité si et seulement si tous les $n_{i}$ sont égaux à $1$.

\section{Le cas du diviseur canonique}\label{Canonique}

Dans le cas où $D$ est un diviseur canonique, c'est-à-dire $\mathcal{O}_{X}(D)=\Omega_{X}^{1}$, les arguments utilisés jusqu'ici ne permettent pas de contrôler le socle de $Rf_{n,\ast}^{\mathrm{st}}\mathbb{Q}_{\ell}$. En effet, on a dans ce cas
$$
d_{\mathbb{A}_{n}}=d_{f_{n}}=n^{2}(g-1)+1.
$$
et l'inégalité
$$
{1-s\geq (n-n_{1}-\cdots-n_{s})(d-2g+2)}
$$
de la fin de la démonstration du théorème \ref{ThrPr} est remplacée par
$$
0\geq (n-n_{1}-\cdots-n_{s})(2g-2-2g+2)
$$
qui ne sert à rien. Néanmoins, avec les notations de cette démonstration, pour $a\in\mathbb{A}_{n,\lambda}\cap \mathrm{Socle}(R_{f_{n,\ast}}^{\mathrm{st}}\mathbb{Q}_{\ell})$, en combinant les inégalités
$$
d_{a}\geq n_{a}^{+}(Rf_{n,\ast}^{\mathrm{st}}\mathbb{Q}_{\ell})\geq\mathrm{d}_{a}^{\mathrm{ab}}(J_{n})
$$
et
$$
\mathrm{d}_{a_{i}}^{\mathrm{ab}}(J_{n_{i}})\geq d_{a_{i}},~\forall i=1,\ldots,s,
$$
à l'égalité
$$
d_{a}=d_{a_{1}}+\cdots+d_{a_{s}},
$$
on obtient les égalités
$$
d_{a}=n_{a}^{+}(Rf_{n,\ast}^{\mathrm{st}}\mathbb{Q}_{\ell})=\mathrm{d}_{a}^{\mathrm{ab}}(J_{n})
$$
et la première de ces égalités implique que $Rf_{n,\ast}^{\mathrm{st},2d_{f_{n}}}\mathbb{Q}_{\ell}$ admet comme facteur direct un faisceau $\ell$-adique non trivial de support $\overline{\{a\}}$.

Maintenant, on peut espérer que $Rf_{n,\ast}^{\mathrm{st},2d_{f_{n}}}\mathbb{Q}_{\ell}$ est localement constant sur chaque strate $\mathbb{A}_{n,\lambda}$ comme c'est le cas sur les strates contenues dans l'ouvert génériquement régulier semi-simple, c'est-à-dire celles avec $\lambda$ de la forme $((n_{1},\ldots,n_{s}),(1,\ldots,1))$  (voir \cite{Chaudouard}), et sur la strate nilpotente, c'est-à-dire celle avec $\lambda=((1),(n))$, sur laquelle $\mathcal{N}_{n}^{e}$ et $N_{n}^{e,\mathrm{st}}$ sont constants (cette strate est isomorphe à $H^{0}(X,\Omega_{X}^{1})$ et on identifie la fibre en $a$ à la fibre en $0$ par $(\mathcal{E},\theta)\mapsto(\mathcal{E},\theta-a)$).

Dans le cas où $D$ est un diviseur canonique, on peut donc espérer démontrer que le socle de $Rf_{n,\ast}^{\mathrm{st}}\mathbb{Q}_{\ell}$ est contenu dans l'ensemble fini des points génériques des strates $\mathbb{A}_{n,\lambda}$.

Pour $n=2$, c'est bien le cas puisque la seule strate non contenue dans l'ouvert génériquement régulier semi-simple est la strate nilpotente.

\bibliographystyle{plain}

\begin{thebibliography}{10}

\bibitem{Altmann}
A.~B. Altman and S.~L. Kleiman.
\newblock Compactifying the picard scheme.
\newblock {\em Advances in Mathematics}, 35:50--112, Oct 1980.

\bibitem{Beauville}
A.~Beauville, M.~S. Narasimhan, and S.~Ramanan.
\newblock Spectral curves and the generalised theta divisor.
\newblock {\em J. reine angew. Math}, 398:169--179, 1989.

\bibitem{Beilinson}
A.~Beilinson, J.~Bernstein, and P.~Deligne.
\newblock Analyse et topologie sur les espaces singuliers.
\newblock {\em Astérisque}, 100:1--89, Mar 1982.

\bibitem{Bosch}
S.~Bosch, W.~Lütkebohmert, and M.~Raynaud.
\newblock {\em Néron models}.
\newblock Number~21 in Ergebnisse der Mathematik und ihrer Grenzgebiete. 3. Folge. Springer-Verlag, 1990.

\bibitem{Chaudouard}
P.-H. Chaudouard and G.~Laumon.
\newblock Le lemme fondamental pondéré. {II}. Énoncés cohomologiques.
\newblock {\em Ann. of Math. (2)}, 176(3):1647--1781, 2012.

\bibitem{Diaz}
S.~Diaz and J.~Harris.
\newblock Ideals associated to deformations of singular plane curves.
\newblock {\em Trans. AMS}, pages 433--468, 1988.

\bibitem{Esteves}
E.~Esteves.
\newblock Compactifying the relative jacobian over families of reduced curves.
\newblock {\em Trans. AMS}, 353(8):3045--3095 (electronic), 2001.

\bibitem{Garcia-Prada}
O.~Garc{\'\i}a-Prada, J.~Heinloth, and A.~H.~W. Schmitt.
\newblock On the motives of moduli of chains and Higgs bundles.
\newblock {\em arXiv}, math.AG, Apr 2011.
\newblock 44 pages.

\bibitem{Hitchin}
N.~Hitchin.
\newblock Stable bundles and integrable systems.
\newblock {\em Duke Math. J}, 54(1):91--114, 1987.

\bibitem{Ngo}
B.~C. Ng{\^o}.
\newblock Le lemme fondamental pour les algèbres de lie.
\newblock {\em Pub. math. de l'IHÉS}, 111(1):1--169, Jun 2010.

\bibitem{Nitsure}
N.~Nitsure.
\newblock Moduli space of semistable pairs on a curve.
\newblock {\em Proc. London Math. Soc. (3)}, 62(2):275--300, 1991.

\bibitem{Rao}
R.~Ranga~Rao.
\newblock Orbital integrals in reductive groups.
\newblock {\em Ann. of Math.}, 96(3):505--510, 1972.

\bibitem{Schaub}
D.~Schaub.
\newblock Courbes spectrales et compactifications de jacobiennes.
\newblock {\em Math. Z.}, 227(2):295--312, 1998.

\end{thebibliography}

\begin{flushleft}
Pierre-Henri Chaudouard \\
Université Paris Diderot (Paris 7)\\ 
Institut de Mathématiques de Jussieu-Paris Rive Gauche \\ 
UMR 7586 \\
Bâtiment Sophie Germain, Case 7012 \\
F-75205 PARIS Cedex 13, France \\
\url{Pierre-Henri.Chaudouard@imj-prg.fr} \\
\bigskip

Gérard Laumon \\
CNRS et Université Paris-Sud \\
UMR 8628 \\
Mathématique, Bâtiment 425 \\
F-91405 Orsay Cedex , France \\
\url{gerard.laumon@math.u-psud.fr}\\
\end{flushleft}
\end{document}